\documentclass{amsart}

\usepackage[T1]{fontenc}
\usepackage[utf8]{inputenc}
\usepackage{amssymb,mathtools}
\usepackage{microtype}
\usepackage[hidelinks]{hyperref}
\usepackage{tikz}
\usetikzlibrary{fadings,patterns.meta}

\theoremstyle{plain}
\newtheorem{theorem}{Theorem}[section]
\newtheorem{lemma}[theorem]{Lemma}
\newtheorem{proposition}[theorem]{Proposition}
\newtheorem{corollary}[theorem]{Corollary}
\newtheorem{conjecture}[theorem]{Conjecture}
\theoremstyle{remark}
\newtheorem{remark}[theorem]{Remark}
\numberwithin{equation}{section}

\title[Gautschi's Conjecture]
{A Proof of Gautschi's Conjecture on Subrange Jacobi Polynomials}

\author{V. Botta}

\address{Department of Mathematics and Computer Science,
S\~ao Paulo State University (UNESP), Presidente Prudente, SP, Brazil}

\email{vanessa.botta@unesp.br}

\author{K. Castillo}

\address{CMUC, Department of Mathematics, University of Coimbra,
3000-143 Coimbra, Portugal}

\email{math@keniercastillo.com}

\author{L. Tertuliano da Silva}

\address{Department of Mathematics and Computer Science,
S\~ao Paulo State University (UNESP), Presidente Prudente, SP, Brazil}

\email{lucas.tertuliano@unesp.br}

\date{}

\subjclass[2020]{Primary 33C45; Secondary 42C05, 60E15, 65D32.}

\keywords{subrange Jacobi polynomials, Gautschi's conjecture,
zero monotonicity, first-crossing principle,
orthogonal polynomial ensembles, Ward identities,
MTP$_2$ association}

\begin{document}

\begin{abstract}
Let \(\pi_n\) be the monic polynomial of degree \(n\) orthogonal on
\([-c,c]\), \(0<c\leq1\), with respect to the Jacobi weight
\((1-x)^\alpha(1+x)^\beta\), where \(-1<\alpha<\beta\).
Gautschi conjectured that
\[
\left[
\frac{\pi_n(-c)}{\pi_n(c)}
\right]^2
\left(\frac{1-c}{1+c}\right)^{\beta-\alpha}
<1.
\]
By his variation formula, this inequality is sufficient for every
positive zero of \(\pi_n\) to move to the right as \(c\) increases.
For \(0<c<1\), a first-crossing argument proves the conjecture
throughout \(0<\alpha<\beta\). Together with the earlier regions
recorded by Gautschi and established by Milovanovi\'c, this settles
\(\beta\geq0\). In the negative wedge, writing
\(\alpha=-r-\lambda\) and \(\beta=-r+\lambda\), an ensemble Ward
bound yields the region \(c^2\leq3/(3+r)\). A strengthening of the
same crossing lemma, using an orthogonal expansion and Markov's
theorem, removes this restriction.
Consequently, the conjecture holds for every \(n\geq1\),
\(-1<\alpha<\beta\), and \(0<c\leq1\). We also give a direct degree-one proof and an explicit asymptotic
limit. The case
\(c=1\) is immediate.
\end{abstract}

\maketitle

\section{Introduction}

Throughout the paper we work in the real regime
\[
-1<\alpha<\beta,
\quad
0<c<1,
\]
where the Jacobi weight restricted to \([-c,c]\) is strictly positive
and bounded. This is the underlying admissible parameter range.

Subrange systems restrict the Jacobi weight to $[-c,c]$.
Subrange Chebyshev polynomials were used by Da Fies and Vianello in
subperiodic trigonometric quadrature \cite{DaFiesVianello2012};
Gautschi also developed subrange systems for the Jacobi weight
\cite{Gautschi2012}; see also the subsequent erratum and correction
\cite{Gautschi2017,Gautschi2019}. Although the truncation is elementary
at the level of the measure, the resulting orthogonal polynomials are generally no longer Jacobi polynomials and acquire a non-trivial
dependence on the geometric parameter $c$. The zeros are the nodes of the Gaussian quadrature rule for the
truncated Jacobi measure. When monotonicity with respect to \(c\) is
available, it orders the positive nodes associated with two interval
lengths and supplies a priori brackets for numerical continuation.
Uniformity in the degree is therefore substantially stronger than
verification at any prescribed collection of degrees; see
\cite{Gautschi2004} for the computational theory of orthogonal
polynomials and Gaussian quadrature.

Gautschi later investigated the motion of the zeros under variation
of $c$. In the ultraspherical case
$\alpha=\beta$, he proved that every positive zero moves strictly to
the right as the interval expands
\cite[Theorem~1]{Gautschi2018}. For the symmetric subrange with an
asymmetric Jacobi weight, $\alpha<\beta$, his variation formula reduces
the corresponding question to a comparison of the two weighted
endpoint values. This led to the following conjecture.

Let
\[
w(x)=(1-x)^\alpha(1+x)^\beta,
\quad -1<\alpha<\beta,
\]
and let $\pi_n=\pi_n^{(\alpha,\beta)}(\,\cdot\,;c)$ be the monic
polynomial of degree $n$ satisfying
\[
\int_{-c}^{c}\pi_n(x)x^j w(x)\,dx=0,
\quad 0\leq j<n.
\]
All the zeros of $\pi_n$ are simple and belong to $(-c,c)$.

\begin{conjecture}[Gautschi, 2018]\label{conj:gautschi}
For every $n\geq1$, $-1<\alpha<\beta$, and $0<c\leq1$,
\begin{equation}
\left[
\frac{\pi_n(-c)}{\pi_n(c)}
\right]^2
\left(\frac{1-c}{1+c}\right)^{\beta-\alpha}
<1.
\label{eq:gautschi}
\end{equation}
\end{conjecture}

For $c=1$ the conjecture is immediate, since $\beta-\alpha>0$ and
$\pi_n(1)\neq0$. Hence only $0<c<1$ is non-trivial, and this
restriction is understood throughout the remainder.

Its relevance to zero motion follows from Gautschi's variation
formula. If $x_{j,n}(c)>0$ is a zero of $\pi_n$, that formula
\cite[Equation~(2.5); cf.~Theorem~2]{Gautschi2018} gives
\begin{equation}
\begin{split}
&w(c)\pi_n(c)^2
\left\{
\frac{1}{c-x_{j,n}}
-
\left[
\frac{\pi_n(-c)}{\pi_n(c)}
\right]^2
\frac{w(-c)}{w(c)}
\frac{1}{c+x_{j,n}}
\right\}
\\[7pt]
&\hspace{35mm}
=
A_{j,n}(c)\pi_n'(x_{j,n})^2
\frac{dx_{j,n}}{dc},
\end{split}
\end{equation}
where $A_{j,n}(c)>0$ is the corresponding Gaussian quadrature
weight, or Christoffel number.
If $x_{j,n}>0$, replacing the factor supplied by
\eqref{eq:gautschi} by $1$ decreases the expression in braces to
$2x_{j,n}/(c^2-x_{j,n}^2)>0$. Hence $dx_{j,n}/dc>0$, and every
zero has positive derivative at every value of $c$ for which it is
positive. This endpoint inequality is therefore a sufficient, but not
equivalent, condition for the sign of that derivative.

Gautschi supported the conjecture with extensive MATLAB experiments
\cite[Appendix~B]{Gautschi2018}. The plots presented there are
consistent with the stronger behaviour that the endpoint expression
decreases from its limiting value $1$ as $c\downarrow0$. On a
logarithmic scale the quantity to be tested is
\[
\Phi_n(c)
=
\log\left\lvert\frac{\pi_n(-c)}{\pi_n(c)}\right\rvert
-(\beta-\alpha)\operatorname{arctanh}c,
\]
and the conjecture is $\Phi_n(c)<0$. This form is numerically stable,
but direct differentiation introduces derivatives of the
$c$-dependent orthogonal polynomial and does not produce a closed
sign estimate. The structural approach below replaces the quotient
by an endpoint difference and, later, by equivalent expressions in
the recurrence coefficients and in a reciprocal moment of an induced
probability measure. No
numerical evidence is used in the proofs.

We emphasise an important point of provenance. Under the standing
assumption $\alpha<\beta$, Gautschi recorded the endpoint inequality
for $\alpha\leq0\leq\beta$, citing an unpublished communication by
Milovanovi\'c to Gautschi
\cite[Remark, item~2, p.~763]{Gautschi2018}. Milovanovi\'c
subsequently supplied a published proof of the more general criterion
\[
\beta-\alpha\geq c\lvert\alpha+\beta\rvert
\]
\cite[Equation~(4) and Theorem~2.1]{Milovanovic2022}. Beyond the
sector recorded by Gautschi, this criterion adds one wedge in each
same-sign sector.

The boundary identity established in Section~\ref{sec:boundary} shows
why the two earlier sign arguments are especially direct. In
Gautschi's recorded sector,
\[
\frac{w'(x)}{w(x)}
=
-\frac{\alpha}{1-x}+\frac{\beta}{1+x}>0,
\quad -c<x<c.
\]
Each summand is non-negative and at least one is strictly positive.
The conjecture follows by a single integration because orthogonality
eliminates the term containing $\pi_n'$. Milovanovi\'c's enlargement
requires only the affine rewrite
\[
\frac{w'(x)}{w(x)}
=
\frac{\beta-\alpha-(\alpha+\beta)x}{1-x^2},
\]
whose numerator is non-negative throughout $[-c,c]$ precisely when
\begin{equation}
\beta-\alpha\geq c\lvert\alpha+\beta\rvert.
\label{eq:known-criterion}
\end{equation}
Thus both earlier results reduce to a pointwise sign test for $w'$.
They are automatically uniform in $n$ and require no estimate for the
orthogonal polynomial itself. This elementary mechanism also explains
exactly why the sign argument stops when the affine numerator changes
sign inside the interval.

For the remainder, we keep $0<c<1$ and define
\begin{equation}
\rho_c=\frac{1-c}{1+c}.
\label{eq:rho}
\end{equation}
Our first new result treats the part of the positive sector in which
the affine numerator changes sign:
\[
0<\alpha<\beta<\rho_c^{-1}\alpha,
\]
where pointwise monotonicity of the weight cannot settle the
conjecture. We regard the endpoint difference as a real-analytic
function of $c$ and rule out its first possible zero by coupling an
identity obtained by integration with a root-motion formula. This proves the
conjecture, for every degree, throughout $0<\alpha<\beta$ and hence
settles the entire admissible range $\beta\geq0$. The first-crossing mechanism does not require an explicit
representation of the subrange polynomials: it converts the global
endpoint inequality into a local transversality statement controlled
by the sum of the zeros.

The same crossing mechanism can be coupled with positive association
in the orthogonal polynomial ensemble. The Ward estimate in
Section~\ref{sec:ensemble} yields a degree-uniform slice of the
negative wedge. Combined with the regions already established, this
covers every admissible parameter pair when $0<c\leq\sqrt3/2$.
Remark~\ref{rem:crossing-addendum} extends
the coefficient calculation in the proof of Lemma~\ref{lem:integration-identity} and applies
Markov's theorem on the monotonicity of zeros. This removes the
root-sum hypothesis from the crossing lemma and extends the proof
to the remaining region $\mathcal O_c$. We retain the direct degree-one proof
and identify the large-degree limit of the normalised endpoint
difference in Section~\ref{sec:asymptotic}.

For a compact statement of the results, retain the notation
\eqref{eq:rho}.
All parameter sets below are understood to be subsets of the
admissible parameter domain $-1<\alpha<\beta$. The sector explicitly
recorded by Gautschi is
\begin{equation}
\mathcal G=
\{\alpha\leq0\leq\beta\}.
\label{eq:gautschi-region}
\end{equation}
The same-sign parts added by Milovanovi\'c's criterion
\eqref{eq:known-criterion} are
\begin{equation}
\begin{split}
\mathcal M_c^{-}
:={}&
\{\alpha<\beta<0,\ \beta\geq\rho_c\alpha\},
\\[7pt]
\mathcal M_c^{+}
:={}&
\{0<\alpha<\beta,\ \beta\geq\rho_c^{-1}\alpha\},
\\[7pt]
\mathcal M_c
:={}&\mathcal M_c^{-}\cup\mathcal M_c^{+}.
\end{split}
\label{eq:milovanovic-increment}
\end{equation}
Thus the full region supplied by Milovanovi\'c's criterion is the
union $\mathcal G\cup\mathcal M_c$. The positive-sector
portion established here and not already contained in that union is
\begin{equation}
\mathcal N_c
:=
\{0<\alpha<\beta<\rho_c^{-1}\alpha\}.
\label{eq:new-region}
\end{equation}
Inside the negative wedge, the range supplied by the Ward estimate is
\begin{equation}
\mathcal P_c
:=
\left\{
-1<\alpha<\beta<\rho_c\alpha<0,\quad
\alpha+\beta\geq-\frac{6(1-c^2)}{c^2}
\right\},
\label{eq:ensemble-region}
\end{equation}
whereas its complement, covered by the strengthened crossing lemma, is
\begin{equation}
\mathcal O_c
:=
\left\{
-1<\alpha<\beta<\rho_c\alpha<0,\quad
\alpha+\beta<-\frac{6(1-c^2)}{c^2}
\right\}.
\label{eq:open-region}
\end{equation}
The five sets
$\mathcal G$, $\mathcal M_c$, $\mathcal N_c$, $\mathcal P_c$, and
$\mathcal O_c$ form a disjoint partition of the admissible parameter
domain. We denote this disjoint decomposition by
\[
\mathcal G\,\sqcup\,
\mathcal M_c\,\sqcup\,
\mathcal N_c\,\sqcup\,
\mathcal P_c\,\sqcup\,
\mathcal O_c.
\]
This partition distinguishes the earlier regions, the range of the Ward
estimate, and the complementary region covered by the addendum.

\begin{theorem}\label{thm:global}
Conjecture~\ref{conj:gautschi} holds for every $n\geq1$,
$-1<\alpha<\beta$, and $0<c\leq1$.
\end{theorem}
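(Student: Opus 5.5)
The proof follows the disjoint partition $\mathcal G\sqcup\mathcal M_c\sqcup\mathcal N_c\sqcup\mathcal P_c\sqcup\mathcal O_c$ of the admissible domain. The case $c=1$ is immediate, as noted above, so fix $0<c<1$ and rewrite \eqref{eq:gautschi} as
\[
D_n(c):=\pi_n(c)^2\,w(c)-\pi_n(-c)^2\,w(-c)>0,
\]
an endpoint difference that depends real-analytically on $c$.

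The basic tool is a boundary identity. Integrate $\frac{d}{dx}\bigl[\pi_n(x)^2w(x)\bigr]$ over $[-c,c]$. The term $2\pi_n\pi_n'w$ integrates to zero by orthogonality, since $\deg\pi_n'<n$. This gives
\[
D_n(c)=\int_{-c}^{c}\pi_n(x)^2\,\frac{\beta-\alpha-(\alpha+\beta)x}{1-x^2}\,w(x)\,dx .
\]
On $\mathcal G\cup\mathcal M_c$ the numerator is nonnegative on $[-c,c]$ and not identically zero, so $D_n>0$. This recovers the earlier results.

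For the remaining regions I would use a first-crossing argument in $c$. As $c\downarrow0$, rescaling $x=ct$ shows that $\pi_n(\pm c)$ become the values of Legendre-type polynomials at $\pm1$. The expansion of $D_n(c)$ is then dominated by $w(c)-w(-c)\approx 2(\beta-\alpha)c$ times a positive factor, so $D_n>0$ for small $c$.

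Suppose $c_*$ is the first zero of $D_n$. At $c_*$ one has $D_n(c_*)=0$ and $D_n'(c_*)\le0$. Differentiate $D_n$ in $c$. The $c$-derivative of $\pi_n$ has degree $<n$, and Gautschi's variation formula expresses the motion of each zero through $dx_{j,n}/dc$. The condition $D_n(c_*)=0$ makes the bracket in that formula equal to $2x_{j,n}/(c^2-x_{j,n}^2)$ up to sign adjustments, so every zero's velocity has the sign of $x_{j,n}$. Feeding this into $D_n'$ yields a transversality expression that is controlled by the sum of the zeros $\sum_j x_{j,n}$, which is also the subleading coefficient of $\pi_n$.

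On $\mathcal N_c$ I expect to show that this root sum has a definite sign, namely $\sum_j x_{j,n}>0$. The reason is that the weight has more mass near $+c$ when $\beta>\alpha>0$, and the sign can be extracted from the integration identity by pairing with $x\pi_n^2$ or from the recurrence coefficient $b_0+\dots+b_{n-1}$. This forces $D_n'(c_*)>0$, a contradiction.

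On $\mathcal P_c$ I would replace the pointwise root-sum sign by an ensemble estimate. Regard $D_n$ as a reciprocal moment under the orthogonal polynomial ensemble with weight $w$ on $[-c,c]^n$. A Ward identity, obtained by integrating by parts in each variable, then bounds the relevant covariance, and MTP$_2$ association fixes its sign. The constraint $\alpha+\beta\ge-6(1-c^2)/c^2$ should be exactly where the Ward bound closes.

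The main obstacle is $\mathcal O_c$, where neither the root-sum sign nor the Ward bound is available. Here I would strengthen the crossing lemma so that it no longer needs the root-sum hypothesis. First, expand the $c$-derivative of $\pi_n$ in the orthogonal basis $\pi_0,\dots,\pi_{n-1}$, extending the coefficient computation behind the integration identity. Second, apply Markov's theorem: the zeros are monotone in $\alpha$ and $\beta$ because the ratio of weights is monotone. That monotonicity compares the zeros with the known region and supplies the sign of the transversality term at $c_*$ directly. Combining all five regions proves Theorem~\ref{thm:global}.
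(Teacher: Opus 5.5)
\textbf{There is a genuine gap.} You reproduce the paper's partition, and the boundary identity does settle $\mathcal G\cup\mathcal M_c$. However, the transversality step at the first crossing is asserted rather than derived, and it lacks the ingredient that makes it work: an induction on the degree.

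At a crossing, Gautschi's formula gives $x_{j,n}'=2K_jx_{j,n}/(c^2-x_{j,n}^2)$, with $K_j=w(c)\pi_n(c)^2/\bigl(A_{j,n}\pi_n'(x_{j,n})^2\bigr)$. Substituting this into $\partial_c\log\bigl(w(c)\pi_n(c)^2\bigr)-\partial_c\log\bigl(w(-c)\pi_n(-c)^2\bigr)$ gives $2/(1-c^2)$ times
\[
\beta-\alpha+2(1-c^2)\sum_j\frac{x_{j,n}}{c^2-x_{j,n}^2}
-4c(1-c^2)\sum_j\frac{K_j\,x_{j,n}}{(c^2-x_{j,n}^2)^2}.
\]
Your sketch offers no way to control the last, Christoffel-weighted sum.

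The paper instead works with $d_n=\frac{1-c^2}{2h_n}D_n$ and three exact identities. The first is $d_n=\lambda-\Sigma_n-(n+\nu)b_n$, where $\lambda=\frac{\beta-\alpha}{2}$, $\nu=\frac{\alpha+\beta+2}{2}$, and $b_n$ is a recurrence coefficient. The second is the root-sum motion formula $\partial_c\Sigma_n=a_n(U_nU_{n-1}-V_nV_{n-1})$, with $U_k=\sqrt{w(c)}\,p_k(c)$ and $V_k=(-1)^k\sqrt{w(-c)}\,p_k(-c)$ for the orthonormal $p_k$. The third is the three-term recurrence evaluated at $\pm c$. At a crossing, where $U_n=V_n=X$, these give
$\partial_cd_n=(2n+2\nu-1)a_nX(U_{n-1}-V_{n-1})+2(\lambda-\Sigma_n)X^2$.
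The first term is positive precisely because $d_{n-1}>0$ is already known from the previous degree. The second term needs the upper bound $\Sigma_n<\lambda$ (equivalently $b_n>0$ at the crossing). Positivity of the root sum is not the relevant condition, and you give no argument that it would suffice. On $\mathcal N_c$ the bound $\Sigma_n<\lambda$ is not read off from the tilt of the weight. It is carried along the induction: $d_n>0$ and $\Sigma_n<\lambda$ imply $\Sigma_{n+1}<\lambda$ through $d_n=\lambda+(n+\nu-1)\Sigma_n-(n+\nu)\Sigma_{n+1}$, and the induction starts from $d_0>0$ using $\nu>1$.

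The other two regions are also misdescribed in ways that matter.

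On $\mathcal P_c$, the Ward identity is not applied to $D_n$. It evaluates the ensemble mean $\mathbb E\sum_ix_i=\Sigma_N$, and MTP$_2$ association is used only to show $\mathbb E\sum_ix_i^3/(1-x_i^2)\geq0$. With $r=-(\alpha+\beta)/2$, this yields $(N-ra^2)\Sigma_N<N\lambda a^2$, hence $\Sigma_{n+1}<\lambda$ when $a^2\leq(n+1)/(n+1+r)$. A crossing then converts this into $\Sigma_n<\lambda$. The uniform condition $c^2\leq3/(3+r)$ reaches this only for $n\geq2$, so the Ward route needs a separate degree-one base case.

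On $\mathcal O_c$, the expansion is of $(1-x^2)p_n'$ in the variable $x$, not of $\partial_c\pi_n$. Divide by $p_n$, evaluate at $\pm c$, add, and use $\sum_{k<n}d_k=n\lambda-(n+\nu-1)\Sigma_n$. This reduces the needed bound $(2n+2\nu-1)a_n(U_{n-1}-V_{n-1})/X\geq2(n+\nu)\Sigma_n-2n\lambda$ to $\sum_jx_{j,n}/(c^2-x_{j,n}^2)\geq0$. Markov's theorem supplies this as follows. At fixed $\alpha+\beta$, $\partial_\lambda\log w=2\operatorname{arctanh}x$ is increasing, so each zero exceeds the corresponding zero of the symmetric weight $\lambda=0$. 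There the zeros are symmetric, so the sum of the odd increasing function $x/(c^2-x^2)$ vanishes. A comparison with an already settled region gives nothing of this kind. This step still uses $d_{n-1}>0$, so the induction cannot be dropped.

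Finally, your small-$c$ argument is not valid as written. The first-order corrections to $\pi_n(\pm c)$ enter at the same order as $w(c)-w(-c)$: for $n=1$, $D_1=\tfrac23(\beta-\alpha)c^3+O(c^5)$, not $2(\beta-\alpha)c^3$. It suffices to note that $\beta-\alpha\geq c\lvert\alpha+\beta\rvert$ for all small $c$, so the boundary identity already gives $D_n>0$ there.
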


For $0<c<1$, this covers all five regions in the partition above.
The proof on $\mathcal G\cup\mathcal M_c$ uses the earlier criterion,
and Proposition~\ref{prop:positive-sector} settles $\mathcal N_c$.
On $\mathcal P_c$, the Ward estimate supplies the root-sum condition
required by the crossing lemma. The extension to $\mathcal O_c$
requires the refined crossing lemma in
Remark~\ref{rem:crossing-addendum}, which removes that condition.
Lemma~\ref{lem:degree-one} supplies a direct base case for the
induction. Proposition~\ref{prop:asymptotic-limit} identifies the
large-degree limit of the normalised endpoint difference for each
fixed admissible triple with $0<c<1$.

The geometry and provenance of the degree-uniform regions are
summarised in Figure~\ref{fig:parameter-range}.

\begin{figure}[!htbp]
\centering
\definecolor{regiong}{RGB}{226,233,237}
\definecolor{regionm}{RGB}{198,211,218}
\definecolor{regionnew}{RGB}{139,181,194}
\definecolor{regionopen}{RGB}{251,248,240}
\definecolor{axisgray}{RGB}{86,93,101}
\definecolor{textgray}{RGB}{40,40,40}

\noindent
\begin{minipage}{0.52\textwidth}
\centering
\begin{tikzpicture}[
  line cap=round,
  line join=round
]
\tikzset{
  axis/.style={
    draw=axisgray,
    line width=0.72pt,
    ->
  },
  boundary/.style={
    draw=black!72,
    line width=0.78pt
  },
  criterion/.style={
    draw=black!62,
    line width=0.72pt,
    dash pattern=on 3.6pt off 2.6pt
  },
  excluded/.style={
    draw=black!52,
    line width=0.58pt,
    dash pattern=on 2.0pt off 2.0pt
  },
  linelabel/.style={
    font=\scriptsize,
    text=textgray,
    inner sep=0pt
  },
  regionlabel/.style={
    font=\footnotesize,
    text=textgray,
    inner sep=0pt,
    align=center
  },
  regiontext/.style={
    font=\scriptsize,
    text=textgray,
    inner sep=0pt,
    align=center
  }
}

\begin{scope}[x=1.70cm,y=1.70cm]
\begin{scope}
\clip (-1.05,-0.08) rectangle (2.35,2.25);

\begin{scope}
\clip
  (-1,0) --
  (0,0) --
  (0,2.25) --
  (-1,2.25) --
  cycle;
\shade[
  bottom color=regiong,
  top color=white
]
  (-1,0) rectangle (0,2.25);
\end{scope}

\begin{scope}
\clip
  (0,0) --
  (0.75,2.25) --
  (0,2.25) --
  cycle;
\shade[
  bottom color=regionm,
  top color=white
]
  (0,0) rectangle (0.75,2.25);
\end{scope}

\begin{scope}
\clip
  (0,0) --
  (0.75,2.25) --
  (2.25,2.25) --
  cycle;
\shade[
  bottom color=regionnew,
  top color=white
]
  (0,0) rectangle (2.25,2.25);
\end{scope}
\end{scope}

\draw[criterion]
  (0,0) -- (0.75,2.25)
  node[
    pos=0.74,
    sloped,
    below=2.0pt,
    linelabel
  ]
  {$\beta=3\alpha$};

\draw[boundary]
  (0,0) -- (2.25,2.25)
  node[
    pos=0.64,
    sloped,
    below=2.0pt,
    linelabel
  ]
  {$\beta=\alpha$};

\draw[excluded]
  (-1,-0.03) -- (-1,2.25);

\draw[axis]
  (-1.055,0) -- (2.35,0)
  node[right] {$\alpha$};

\draw[axis]
  (0,-0.065) -- (0,2.255)
  node[above right] {$\beta$};

\node[regionlabel]
  at (-0.55,1.15)
  {$\mathcal G$};

\node[regionlabel]
  at (0.30,1.55)
  {$\mathcal M_c^{+}$};

\node[regionlabel]
  at (1.25,1.55)
  {$\mathcal N_c$};

\draw[black!68]
  (-1,0.024) -- (-1,-0.024);

\node[
  font=\scriptsize,
  below,
  text=textgray
]
  at (-1,-0.028)
  {$-1$};

\draw[axisgray!60!black]
  (1,0.024) -- (1,-0.024);

\node[
  font=\scriptsize,
  below,
  text=textgray
]
  at (1,-0.028)
  {$1$};

\draw[axisgray!60!black]
  (2,0.024) -- (2,-0.024);

\node[
  font=\scriptsize,
  below,
  text=textgray
]
  at (2,-0.028)
  {$2$};

\draw[black!68]
  (0.024,1) -- (-0.024,1);

\node[
  font=\scriptsize,
  left,
  text=textgray
]
  at (-0.028,1)
  {$1$};

\draw[axisgray!60!black]
  (0.024,2) -- (-0.024,2);

\node[
  font=\scriptsize,
  left,
  text=textgray
]
  at (-0.028,2)
  {$2$};

\node[
  font=\footnotesize\bfseries,
  text=textgray,
  anchor=west
]
  at (-1.00,2.52)
  {\textup{(a)} $c=\tfrac12$};

\end{scope}
\end{tikzpicture}
\end{minipage}%
\hfill
\begin{minipage}{0.44\textwidth}
\centering
\begin{tikzpicture}[
  line cap=round,
  line join=round
]
\tikzset{
  axis/.style={
    draw=axisgray,
    line width=0.68pt,
    ->
  },
  boundary/.style={
    draw=black!72,
    line width=0.76pt
  },
  criterion/.style={
    draw=black!62,
    line width=0.70pt,
    dash pattern=on 3.6pt off 2.6pt
  },
  frontier/.style={
    draw=black!60,
    line width=0.70pt,
    dash pattern=on 4.0pt off 1.6pt on 0.8pt off 1.6pt
  },
  excluded/.style={
    draw=black!52,
    line width=0.58pt,
    dash pattern=on 2.0pt off 2.0pt
  },
  leader/.style={
    draw=black!52,
    line width=0.45pt
  },
  linelabel/.style={
    font=\scriptsize,
    text=textgray,
    inner sep=0pt
  },
  regiontext/.style={
    font=\scriptsize,
    text=textgray,
    inner sep=0pt,
    align=center
  }
}

\begin{scope}[x=4.20cm,y=4.20cm]

\pgfmathsetmacro{\rhoclower}{4*sqrt(5)-9}

\begin{scope}
\clip (-1.05,-1.10) rectangle (0.10,0.08);

\fill[regionm]
  (-1,0) --
  (0,0) --
  (-1,\rhoclower) --
  cycle;

\fill[regionnew]
  (-1,-0.5) --
  (-1,\rhoclower) --
  (0,0) --
  (-0.75,-0.75) --
  cycle;

\fill[regionopen]
  (-1,-1) --
  (-1,-0.5) --
  (-0.75,-0.75) --
  cycle;

\fill[
  pattern={
    Lines[
      angle=30,
      distance=5.0pt,
      line width=0.24pt
    ]
  },
  pattern color=black!22
]
  (-1,-1) --
  (-1,-0.5) --
  (-0.75,-0.75) --
  cycle;
\end{scope}

\draw[criterion]
  (-1,\rhoclower) -- (0,0)
  node[
    pos=0.58,
    sloped,
    below=2.2pt,
    linelabel
  ]
  {$\beta=\rho_c\alpha$};

\draw[frontier]
  (-1,-0.5) -- (-0.75,-0.75);

\draw[boundary]
  (-1,-1) -- (0,0)
  node[
    pos=0.76,
    sloped,
    below=2.0pt,
    linelabel
  ]
  {$\beta=\alpha$};

\draw[excluded]
  (-1,-1.03) -- (-1,0.03);

\draw[axis]
  (-1.055,0) -- (0.055,0)
  node[right] {$\alpha$};

\draw[axis]
  (0,-1.055) -- (0,0.055)
  node[above right] {$\beta$};

% External label and leader for the narrow region M_c^{-}.
\node[
  font=\scriptsize,
  text=textgray,
  anchor=south west,
  inner sep=0pt
]
  at (-0.48,0.065)
  {$\mathcal M_c^{-}$};

\draw[leader]
  (-0.43,0.055) --
  (-0.60,0.020) --
  (-0.86,-0.024);

\node[regiontext]
  at (-0.40,-0.23)
  {$\mathcal P_c$};

\node[regiontext]
  at (-0.87,-0.76)
  {$\mathcal O_c$};

\node[
  font=\tiny,
  text=black!38,
  align=center,
  inner sep=0pt
]
  at (-0.30,-0.74)
  {outside $\alpha<\beta$};

\draw[axisgray!60!black]
  (-1,0.012) -- (-1,-0.012);

\node[
  font=\scriptsize,
  anchor=north east,
  text=textgray
]
  at (-1.012,-0.014)
  {$-1$};

\draw[axisgray!60!black]
  (0.012,-1) -- (-0.012,-1);

\node[
  font=\scriptsize,
  left,
  text=textgray
]
  at (-0.015,-1)
  {$-1$};

\node[
  font=\footnotesize\bfseries,
  text=textgray,
  anchor=west
]
  at (-1.00,0.20)
  {\textup{(b)} $c=\tfrac{2}{\sqrt5}$};

\end{scope}
\end{tikzpicture}
\end{minipage}

\caption{Degree-uniform parameter regions within
$-1<\alpha<\beta$. Panel~\textup{(a)} displays the portion of the
admissible domain with $\beta\geq0$ for $c=1/2$, while
panel~\textup{(b)} displays the portion with $\beta<0$ for
$c=2/\sqrt5$. The regions $\mathcal G$ and $\mathcal M_c^\pm$ were
previously known; $\mathcal N_c$ is the positive-sector region and
$\mathcal P_c$ is the range of the Ward estimate. The hatched region
$\mathcal O_c$ is its complement in the negative wedge, covered by
Remark~\ref{rem:crossing-addendum}; it is empty when
$0<c\leq\sqrt3/2$. The internal dashed lines are criterion
boundaries. In panel~\textup{(b)}, the short dot-dashed segment is
$\alpha+\beta=-3/2$, the boundary of the Ward range for
the displayed value of $c$. In both panels, the uncoloured portion
below the line $\beta=\alpha$ lies outside the admissible domain.
The lines $\beta=\alpha$ and $\alpha=-1$ are excluded. The fading at
the upper edge of panel~\textup{(a)} indicates continuation beyond
the plotting window.}
\label{fig:parameter-range}
\end{figure}

Section~\ref{sec:boundary} derives the boundary identity and recovers
the two earlier regions. The positive-sector result is proved in
Section~\ref{sec:positive} by means of integration and root-motion
identities and the ensuing crossing lemma.
Section~\ref{sec:degree-one} treats degree one.
Section~\ref{sec:ensemble} gives the Ward bound and proves
Theorem~\ref{thm:global}, invoking the refinement in
Remark~\ref{rem:crossing-addendum} for $\mathcal O_c$.
Section~\ref{sec:asymptotic} treats the large-degree limit, and
Section~\ref{sec:mobius} gives a M\"obius reformulation of the
endpoint inequality.

\section{The boundary identity and the two earlier regions}
\label{sec:boundary}

Set
\[
\delta=\beta-\alpha>0,
\quad
s=\alpha+\beta,
\]
and write
\[
h_n=\int_{-c}^{c}\pi_n(x)^2w(x)\,dx.
\]
The endpoint difference relevant to the conjecture is
\begin{equation}
D_n=
w(c)\pi_n(c)^2-w(-c)\pi_n(-c)^2.
\label{eq:endpoint-difference}
\end{equation}
The calculation leading to the next identity is the core of
Milovanovi\'c's argument
\cite[Equation~(8) and the display following it]{Milovanovic2022};
we record the result directly on the original interval $[-c,c]$.

\begin{proposition}\label{prop:boundary}
Conjecture~\ref{conj:gautschi} is equivalent to $D_n>0$. Moreover,
\begin{equation}
D_n
=
\int_{-c}^{c}w'(x)\pi_n(x)^2\,dx
\label{eq:boundary-identity}
\end{equation}
and
\begin{equation}
\frac{D_n}{h_n}
=
\int_{-c}^{c}
\frac{\delta-sx}{1-x^2}\,d\eta_n(x),
\quad
d\eta_n(x)=
\frac{\pi_n(x)^2w(x)}{h_n}\,dx.
\label{eq:induced-identity}
\end{equation}
\end{proposition}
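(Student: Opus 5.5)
The proof has three parts: an equivalence by direct algebra, the identity \eqref{eq:boundary-identity} by integrating an exact derivative, and \eqref{eq:induced-identity} by rewriting $w'/w$ and normalising.

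\emph{Equivalence.} Fix $0<c<1$. Then $w(c)>0$, $w(-c)>0$, and $\pi_n(c)\neq0$, since every zero of $\pi_n$ lies in the open interval $(-c,c)$. Note that
\[
\frac{w(-c)}{w(c)}=\frac{(1+c)^\alpha(1-c)^\beta}{(1-c)^\alpha(1+c)^\beta}=\rho_c^{\beta-\alpha}.
\]
Hence the left-hand side of \eqref{eq:gautschi} equals
\[
\frac{w(-c)\pi_n(-c)^2}{w(c)\pi_n(c)^2}.
\]
Multiplying through by the positive quantity $w(c)\pi_n(c)^2$ shows that \eqref{eq:gautschi} holds if and only if $D_n>0$.

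\emph{The identity \eqref{eq:boundary-identity}.} Since $w$ is smooth on $[-c,c]$, the fundamental theorem of calculus gives
\[
D_n=\int_{-c}^{c}\frac{d}{dx}\bigl(w\pi_n^2\bigr)\,dx=\int_{-c}^{c}w'\pi_n^2\,dx+2\int_{-c}^{c}w\,\pi_n\pi_n'\,dx .
\]
The last integral vanishes by orthogonality, because $\pi_n'$ has degree $n-1<n$. This yields \eqref{eq:boundary-identity}.

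\emph{The identity \eqref{eq:induced-identity}.} For $|x|<1$,
\[
\frac{w'(x)}{w(x)}=-\frac{\alpha}{1-x}+\frac{\beta}{1+x}=\frac{-\alpha(1+x)+\beta(1-x)}{1-x^2}=\frac{\delta-sx}{1-x^2}.
\]
Substitute $w'=w\,(\delta-sx)/(1-x^2)$ into \eqref{eq:boundary-identity} and divide by $h_n>0$. The result is \eqref{eq:induced-identity}, where $\eta_n$ is a probability measure by the definition of $h_n$.

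No step is a real obstacle; the argument is routine once the orthogonality cancellation is seen. The only points needing care are that $\pi_n(c)\neq0$ (so dividing by $w(c)\pi_n(c)^2$ is legitimate) and that $c<1$ keeps $w$ and $w'$ bounded and smooth on $[-c,c]$, so there are no endpoint singularities even when $\alpha$ or $\beta$ is negative.
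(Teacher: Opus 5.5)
Your proposal is correct and follows the paper's proof essentially step for step. You use the identity $w(-c)/w(c)=\rho_c^{\delta}$ for the equivalence, then the fundamental theorem of calculus with orthogonality removing the $\pi_n\pi_n'$ term, and finally the rewrite $w'/w=(\delta-sx)/(1-x^2)$ followed by division by $h_n$.
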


\begin{proof}
Since
\[
\frac{w(-c)}{w(c)}
=
\left(\frac{1-c}{1+c}\right)^\delta,
\]
the asserted equivalence follows from
\eqref{eq:endpoint-difference}. The endpoint
values of $\pi_n$ do not vanish because all its zeros lie in
$(-c,c)$.

The fundamental theorem of calculus gives
\[
\begin{split}
D_n
&=
\int_{-c}^{c}
\bigl(w(x)\pi_n(x)^2\bigr)'\,dx
\\[7pt]
&=
\int_{-c}^{c}w'(x)\pi_n(x)^2\,dx
+
2\int_{-c}^{c}w(x)\pi_n(x)\pi_n'(x)\,dx.
\end{split}
\]
The second integral is zero because $\pi_n'$ has degree $n-1$.
This proves \eqref{eq:boundary-identity}. Finally,
\[
\frac{w'(x)}{w(x)}
=
\frac{\delta-sx}{1-x^2},
\]
and \eqref{eq:induced-identity} follows.
\end{proof}

\begin{corollary}[The mixed-sign sector and its boundary]
\label{cor:gautschi-sector}
If $\alpha\leq0\leq\beta$, then
Conjecture~\ref{conj:gautschi} holds for every $n\geq1$.
\end{corollary}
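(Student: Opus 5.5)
By Proposition~\ref{prop:boundary}, the conjecture is equivalent to $D_n>0$. We also have the representation \eqref{eq:boundary-identity},
\[
D_n=\int_{-c}^{c}w'(x)\pi_n(x)^2\,dx .
\]
So it suffices to show that $w'$ is non-negative on $(-c,c)$ and not identically zero there. Then the integrand $w'\pi_n^2$ is non-negative and continuous on $(-c,c)$. Since $\pi_n$ has only finitely many zeros, the integrand is strictly positive on a set of positive measure, which forces $D_n>0$.

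For the sign of $w'$, use the logarithmic derivative
\[
\frac{w'(x)}{w(x)}=-\frac{\alpha}{1-x}+\frac{\beta}{1+x},\qquad -c<x<c .
\]
Both denominators are positive because $0<c<1$, and $w(x)>0$ on $[-c,c]$. If $\alpha\le 0\le\beta$, each summand is non-negative. Under the standing assumption $\alpha<\beta$, the case $\alpha=\beta=0$ cannot occur, so at least one of $-\alpha,\beta$ is strictly positive. The corresponding summand is then strictly positive on all of $(-c,c)$, and hence $w'>0$ there. Equivalently, the affine numerator $\delta-sx$ in \eqref{eq:induced-identity} satisfies $\delta-sx\ge \delta-c\lvert s\rvert$. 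In this sector $\lvert s\rvert\le\delta$, so $\delta-c\lvert s\rvert\geq(1-c)\delta>0$.

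Applying this in \eqref{eq:induced-identity}, $D_n/h_n$ is the integral of a strictly positive function against the probability measure $\eta_n$, which has positive density away from the zeros of $\pi_n$. Since $h_n>0$, we get $D_n>0$ for $0<c<1$.

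The remaining case $c=1$ was already dispatched in the introduction: there $\bigl((1-c)/(1+c)\bigr)^{\beta-\alpha}=0$ and $\pi_n(1)\neq0$. No step is a genuine obstacle. The only points needing care are the boundary cases $\alpha=0$ or $\beta=0$, which are handled by the strict inequality $\alpha<\beta$, and the observation that $w$ is bounded and smooth on $[-c,c]$. The latter justifies applying the fundamental theorem of calculus in Proposition~\ref{prop:boundary} even when $\alpha$ or $\beta$ is negative.
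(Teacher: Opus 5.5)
Your proposal is correct and matches the paper's proof: the boundary identity \eqref{eq:boundary-identity} reduces the claim to the sign of $w'$, and the logarithmic derivative $-\alpha/(1-x)+\beta/(1+x)$ is strictly positive on $(-c,c)$ because $\alpha\le0\le\beta$ and $\alpha<\beta$. Your additional check that $\lvert s\rvert\le\delta$, so the sector lies inside Milovanovi\'c's criterion, and your separate treatment of $c=1$ agree with the paper's framing.
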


\begin{proof}
For $-c<x<c$,
\[
\frac{w'(x)}{w(x)}
=
-\frac{\alpha}{1-x}+\frac{\beta}{1+x}>0.
\]
Hence $w'(x)>0$, and Proposition~\ref{prop:boundary} gives
\[
D_n=\int_{-c}^{c}w'(x)\pi_n(x)^2\,dx>0.
\]
\end{proof}

\begin{corollary}[Milovanovi\'c's criterion]
\label{cor:milovanovic-region}
If \eqref{eq:known-criterion} holds, then
Conjecture~\ref{conj:gautschi} holds for every $n\geq1$.
\end{corollary}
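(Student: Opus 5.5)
We argue exactly as in Corollary~\ref{cor:gautschi-sector}, using Proposition~\ref{prop:boundary}. That proposition shows the conjecture is equivalent to $D_n>0$, and it gives
\[
D_n=\int_{-c}^{c}w'(x)\pi_n(x)^2\,dx,
\qquad
\frac{w'(x)}{w(x)}=\frac{\delta-sx}{1-x^2}.
\]
So it suffices to show that the affine numerator $\ell(x)=\delta-sx$ is non-negative on $[-c,c]$ and is not identically zero there.

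For the sign, recall that a linear function attains its minimum on $[-c,c]$ at an endpoint. Hence
\[
\min_{[-c,c]}\ell=\delta-c\lvert s\rvert,
\]
and this is non-negative precisely under \eqref{eq:known-criterion}. For non-vanishing, note that $\ell(0)=\delta>0$. Since $\ell$ is affine, it can vanish at most at one point, and only when $s\neq0$, in which case that point is $x=\delta/s$. Therefore $\ell>0$ on $[-c,c]$ except possibly at a single endpoint, namely when equality holds in \eqref{eq:known-criterion}.

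Next, on $(-c,c)$ we have $w>0$ and $1-x^2>0$, so $w'\geq0$ there, with strict inequality except possibly at one point. Also $\pi_n^2$ is positive away from its $n$ zeros. The integrand $w'\pi_n^2$ is therefore a non-negative continuous function on $[-c,c]$ that vanishes only at finitely many points. This gives $D_n>0$.

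By Proposition~\ref{prop:boundary}, $D_n>0$ yields \eqref{eq:gautschi} for every $n\geq1$. The case $c=1$ was already observed to be immediate, so we only need $0<c<1$, where $w$ is bounded on $[-c,c]$. There is no real obstacle in this argument. The only point needing care is the equality case of \eqref{eq:known-criterion}: there $w'$ vanishes at an endpoint, and strict positivity of $D_n$ comes from $w'\pi_n^2$ being positive on a set of positive measure rather than everywhere.
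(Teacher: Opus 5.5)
Your proposal is correct and follows essentially the same route as the paper: the boundary identity of Proposition~\ref{prop:boundary}, the bound $\delta-sx\geq\delta-c\lvert s\rvert\geq0$ on $[-c,c]$, and the observation that in the equality case the numerator vanishes only at one endpoint, so the integrand is positive almost everywhere and $D_n>0$. Your extra remarks, namely continuity of $w'$ on $[-c,c]$ for $0<c<1$ and the separate handling of $c=1$, only make explicit what the paper leaves implicit.
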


\begin{proof}
For $-c\leq x\leq c$,
\[
\delta-sx\geq\delta-c\lvert s\rvert\geq0.
\]
In the equality case, the numerator can vanish only at one endpoint,
and is therefore positive almost everywhere in $(-c,c)$. Since
$\pi_n(x)^2w(x)>0$ almost everywhere, the integral in
\eqref{eq:induced-identity} is strictly positive.
\end{proof}

\section{The positive sector}
\label{sec:positive}

\subsection{Integration and root motion}

Let $(p_n)_{n\geq0}$, where $p_n=\pi_n/\sqrt{h_n}$, be the
corresponding orthonormal polynomials, chosen with positive leading
coefficients, and write
\begin{equation}
xp_n(x)
=
a_{n+1}p_{n+1}(x)+b_np_n(x)+a_np_{n-1}(x),
\quad n\geq0,
\label{eq:three-term}
\end{equation}
where $p_{-1}=0$, $a_0=0$, and $a_n>0$ for $n\geq1$.
Let $\Sigma_n$ be the sum of the zeros of $p_n$, with
$\Sigma_0=0$. Comparison of leading coefficients in
\eqref{eq:three-term} gives
\[
\Sigma_{n+1}=\Sigma_n+b_n.
\]
Set
\[
\lambda=\frac{\beta-\alpha}{2},
\quad
\nu=\frac{\alpha+\beta+2}{2},
\]
and define
\[
U_n=\sqrt{w(c)}\,p_n(c),
\quad
V_n=(-1)^n\sqrt{w(-c)}\,p_n(-c).
\]
Since all the zeros of $p_n$ lie in $(-c,c)$, we have
$p_n(c)>0$ and $(-1)^np_n(-c)>0$, hence $U_n,V_n>0$.
Finally, put
\begin{equation}
d_n=
\frac{1-c^2}{2}\bigl(U_n^2-V_n^2\bigr)
=
\frac{1-c^2}{2h_n}D_n.
\label{eq:normalized-endpoint}
\end{equation}
Thus $d_n>0$ is equivalent to the conjectured endpoint inequality.

\begin{lemma}\label{lem:integration-identity}
For every $n\geq0$,
\begin{equation}
d_n
=
\lambda-\Sigma_n-(n+\nu)b_n
=
\lambda+(n+\nu-1)\Sigma_n-(n+\nu)\Sigma_{n+1}.
\label{eq:integration-identity}
\end{equation}
\end{lemma}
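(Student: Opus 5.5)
The plan is to apply the fundamental theorem of calculus, as in Proposition~\ref{prop:boundary}, but with the weight multiplied by $1-x^2$. This factor clears the denominator in $w'/w$, and it produces exactly the normalisation $\frac{1-c^2}{2}$ that appears in $d_n$.

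\emph{The integrated identity.} Since $(1-x^2)w'(x)=(\delta-sx)w(x)$ with $\delta=2\lambda$ and $s=2\nu-2$, I will integrate $\bigl((1-x^2)w(x)p_n(x)^2\bigr)'$ over $[-c,c]$. The boundary terms give
\[
(1-c^2)\bigl(w(c)p_n(c)^2-w(-c)p_n(-c)^2\bigr)=(1-c^2)(U_n^2-V_n^2)=2d_n,
\]
because $V_n^2=w(-c)p_n(-c)^2$. Expanding the derivative then yields
\[
2d_n=\int_{-c}^{c}\Bigl[-2x+(\delta-sx)\Bigr]p_n^2w\,dx
+2\int_{-c}^{c}(1-x^2)p_np_n'w\,dx .
\]

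\emph{The first integral.} By orthonormality and \eqref{eq:three-term} we have $\int xp_n^2w\,dx=b_n$. The first integral is therefore $-2b_n+\delta-sb_n$.

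\emph{The second integral.} The term $\int p_np_n'w\,dx$ vanishes because $\deg p_n'<n$. For the term $\int x^2p_np_n'w\,dx$, write $p_n=k_nx^n+k_n'x^{n-1}+\cdots$, so that $\Sigma_n=-k_n'/k_n$. Modulo polynomials of degree $<n$,
\[
x^2p_n'\equiv nk_nx^{n+1}+(n-1)k_n'x^n .
\]
Two moment facts are needed:
\[
\int x^np_nw\,dx=\frac{1}{k_n},
\quad
\int x^{n+1}p_nw\,dx=\frac{\Sigma_{n+1}}{k_n}.
\]
The second follows by writing $x^{n+1}=p_{n+1}/k_{n+1}+\Sigma_{n+1}x^n+(\text{lower})$, which is just the statement that $\Sigma_{n+1}$ is the negated subleading coefficient of the monic $\pi_{n+1}$. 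These give
\[
\int x^2p_np_n'w\,dx=n\Sigma_{n+1}-(n-1)\Sigma_n .
\]

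\emph{Assembling.} Combining the pieces,
\[
2d_n=2\lambda-2b_n-(2\nu-2)b_n-2n\Sigma_{n+1}+2(n-1)\Sigma_n .
\]
Substituting $\Sigma_{n+1}=\Sigma_n+b_n$ gives $d_n=\lambda-\Sigma_n-(n+\nu)b_n$. Eliminating $b_n$ instead gives the second form in \eqref{eq:integration-identity}. The case $n=0$ is covered by the conventions $\Sigma_0=0$ and $p_0$ constant, since then the $p_n'$ term is absent.

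The only delicate step is the coefficient bookkeeping for $\int x^2p_np_n'w\,dx$, specifically identifying $\int x^{n+1}p_nw\,dx$ with $\Sigma_{n+1}/k_n$. I would double-check it against the degree-one case.
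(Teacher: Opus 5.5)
Your proposal is correct and follows essentially the same route as the paper: integrate $\bigl((1-x^2)wp_n^2\bigr)'$ over $[-c,c]$, let orthogonality remove $\int p_np_n'w\,dx$, and find the remaining term through leading-coefficient bookkeeping. The only difference is packaging: you evaluate the moments $\int x^np_nw\,dx$ and $\int x^{n+1}p_nw\,dx$, while the paper reads off the coefficient $\Sigma_n+nb_n$ of $\pi_n$ in $x^2\pi_n'-n\pi_{n+1}$, and the two are equivalent.
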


\begin{proof}
Since
\[
\bigl((1-x^2)w(x)\bigr)'
=
\bigl(2\lambda-2\nu x\bigr)w(x),
\]
integration of
$\bigl((1-x^2)wp_n^2\bigr)'$ over $[-c,c]$ gives
\[
2d_n
=
2\lambda-2\nu b_n
+
2\int_{-c}^{c}(1-x^2)p_n(x)p_n'(x)w(x)\,dx.
\]
Orthogonality annihilates the part containing $p_np_n'$. To identify
the coefficient of $p_n$ in the orthogonal expansion of $x^2p_n'$,
write
\[
\pi_n(x)=x^n-\Sigma_nx^{n-1}+\cdots,
\quad
\pi_{n+1}(x)=x^{n+1}-\Sigma_{n+1}x^n+\cdots.
\]
Comparison of the two leading coefficients gives
\[
x^2\pi_n'
=
n\pi_{n+1}+(\Sigma_n+nb_n)\pi_n
+\text{a polynomial of degree at most }n-1.
\]
Hence
\[
\int_{-c}^{c}(1-x^2)p_np_n'w\,dx
=
-\Sigma_n-nb_n,
\]
which proves the first equality in \eqref{eq:integration-identity}. The
second follows from $b_n=\Sigma_{n+1}-\Sigma_n$.
\end{proof}

\begin{corollary}\label{cor:recurrence-form}
For every $n\geq0$,
\begin{equation}
(1-c^2)\frac{D_n}{h_n}
=
\delta-2\sum_{j=0}^{n-1}b_j-(s+2n+2)b_n.
\label{eq:recurrence-form}
\end{equation}
Consequently, for $n\geq1$, the conjectured inequality is equivalent to
\begin{equation}
\sum_{j=0}^{n-1}b_j+
\left(n+1+\frac{s}{2}\right)b_n<\frac{\delta}{2}.
\label{eq:recurrence-criterion}
\end{equation}
\end{corollary}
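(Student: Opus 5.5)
The plan is to read \eqref{eq:recurrence-form} directly off Lemma~\ref{lem:integration-identity} by converting the normalised quantities back to $D_n/h_n$. By definition \eqref{eq:normalized-endpoint}, $d_n=\frac{1-c^2}{2h_n}D_n$. Hence multiplying the first equality in \eqref{eq:integration-identity} by $2$ gives
\[
(1-c^2)\frac{D_n}{h_n}=2\lambda-2\Sigma_n-2(n+\nu)b_n .
\]
I then substitute the parameters. We have $2\lambda=\beta-\alpha=\delta$ and $2(n+\nu)=2n+\alpha+\beta+2=s+2n+2$.

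For the sum of zeros, the relation $\Sigma_{j+1}=\Sigma_j+b_j$ with $\Sigma_0=0$ telescopes to $\Sigma_n=\sum_{j=0}^{n-1}b_j$. Inserting these three substitutions yields \eqref{eq:recurrence-form} exactly. This holds for every $n\ge0$, with the empty sum when $n=0$.

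For the criterion, Proposition~\ref{prop:boundary} shows the conjecture is equivalent to $D_n>0$. Since $h_n>0$ and $1-c^2>0$ for $0<c<1$, this is equivalent to the right side of \eqref{eq:recurrence-form} being positive. Dividing that inequality by $2$ and rearranging gives \eqref{eq:recurrence-criterion}. The case $c=1$ is trivial, as noted earlier, so the standing restriction $0<c<1$ costs nothing. The restriction $n\ge1$ only reflects that the conjecture concerns $n\ge1$.

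There is no real obstacle here. The only point needing care is the bookkeeping of the factor $2$ between $d_n$ and $D_n/h_n$, together with the identification $2\nu=s+2$.
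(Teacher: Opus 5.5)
Your proposal is correct and matches the paper's proof: you substitute the telescoped $\Sigma_n=\sum_{j<n}b_j$ into the first equality of Lemma~\ref{lem:integration-identity}, convert $d_n$ to $D_n/h_n$ via \eqref{eq:normalized-endpoint}, and use $2\lambda=\delta$ and $2(n+\nu)=s+2n+2$. One small precision: the equivalence genuinely needs $1-c^2>0$, because at $c=1$ the classical Jacobi coefficients satisfy \eqref{eq:recurrence-criterion} with equality, so the restriction $0<c<1$ is part of the statement itself and not merely harmless.
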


\begin{proof}
Use $\Sigma_n=\sum_{j=0}^{n-1}b_j$ in the first equality of
\eqref{eq:integration-identity}, and then use
\eqref{eq:normalized-endpoint}.
\end{proof}

For every \(k\geq0\), the moment
\[
m_k(c)
=
\int_{-c}^{c}x^kw(x)\,dx
=
c^{k+1}
\int_{-1}^{1}
t^k(1-ct)^\alpha(1+ct)^\beta\,dt
\]
is real-analytic for \(0<c<1\). Writing
\[
\pi_n(x;c)
=
x^n+\sum_{j=0}^{n-1}q_{j,n}(c)x^j,
\]
the orthogonality conditions are equivalent to the Hankel system
\[
\sum_{j=0}^{n-1}
m_{j+k}(c)q_{j,n}(c)
=
-m_{n+k}(c),
\quad
0\leq k<n.
\]
Its coefficient matrix is positive definite, since it is the moment
matrix of a positive measure with infinite support, and hence its
determinant is strictly positive. Cramer's rule therefore shows that
the coefficients \(q_{j,n}(c)\), and consequently
\(\pi_n\), \(h_n\), \(\Sigma_n\), and \(b_n\), are real-analytic in
\(c\). Since
\[
a_n=\sqrt{\frac{h_n}{h_{n-1}}},
\]
the coefficients \(a_n\) are real-analytic as well. The same then
holds for \(U_n\), \(V_n\), and \(d_n\).

\begin{lemma}[Motion of the root sum]\label{lem:root-sum-motion}
For every $n\geq1$,
\begin{equation}
\partial_c\Sigma_n
=
a_n\bigl(U_nU_{n-1}-V_nV_{n-1}\bigr).
\label{eq:root-sum-motion}
\end{equation}
Here and below, $\partial_c$ denotes differentiation with respect to
$c$.
\end{lemma}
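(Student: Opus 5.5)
Since $\pi_n(x;c)=x^n-\Sigma_n x^{n-1}+\cdots$, we have $\Sigma_n=-q_{n-1,n}(c)$. So I will compute $\partial_c\pi_n$ as a polynomial in $x$ and read off its coefficient of $x^{n-1}$. Differentiating in $c$ keeps the polynomial monic, so $\partial_c\pi_n$ has degree at most $n-1$, and its $x^{n-1}$-coefficient is exactly $-\partial_c\Sigma_n$. Hence
\[
\partial_c\pi_n=-\partial_c\Sigma_n\,\pi_{n-1}+(\text{degree}\le n-2),
\]
and, using $h_{n-1}=\int\pi_{n-1}^2w$,
\[
-\partial_c\Sigma_n\,h_{n-1}=\int_{-c}^{c}(\partial_c\pi_n)\,\pi_{n-1}\,w\,dx .
\]

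To evaluate the right-hand side, differentiate the orthogonality relation
\[
\int_{-c}^{c}\pi_n(x;c)\,\pi_{n-1}(x)\,w(x)\,dx=0
\]
in $c$, holding the polynomial $\pi_{n-1}$ fixed at the current value of $c$. This is legitimate because, for each fixed polynomial $q$ of degree below $n$, the orthogonality $\int_{-c}^{c}\pi_n q\,w=0$ holds identically in $c$. The Leibniz rule for the variable limits $\pm c$ then gives
\[
\int_{-c}^{c}\partial_c\pi_n\,\pi_{n-1}\,w\,dx
+w(c)\pi_n(c)\pi_{n-1}(c)+w(-c)\pi_n(-c)\pi_{n-1}(-c)=0 .
\]
Real-analyticity of the coefficients, established just before the lemma, justifies the differentiation.

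Combining the two displays gives
\[
\partial_c\Sigma_n\,h_{n-1}=w(c)\pi_n(c)\pi_{n-1}(c)+w(-c)\pi_n(-c)\pi_{n-1}(-c).
\]
Now pass to orthonormal quantities. Substitute $\pi_k=\sqrt{h_k}\,p_k$ and divide by $h_{n-1}$. The prefactor becomes $\sqrt{h_nh_{n-1}}/h_{n-1}=\sqrt{h_n/h_{n-1}}=a_n$. This gives
\[
\partial_c\Sigma_n=a_n\bigl(w(c)p_n(c)p_{n-1}(c)+w(-c)p_n(-c)p_{n-1}(-c)\bigr).
\]
Finally,
\[
w(-c)p_n(-c)p_{n-1}(-c)=(-1)^{2n-1}V_nV_{n-1}=-V_nV_{n-1},
\]
while the first term equals $U_nU_{n-1}$. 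This yields \eqref{eq:root-sum-motion}.

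The main care point is the sign bookkeeping. Specifically, one must check that the boundary contributions enter as $+w(c)(\cdots)$ and $+w(-c)(\cdots)$, since $\frac{d}{dc}\int_{-c}^{c}=f(c)+f(-c)$, and then track the factor $(-1)^{2n-1}$ that produces the minus sign in front of $V_nV_{n-1}$. For $n=1$ the argument goes through unchanged with $\pi_0=1$ and $h_0=m_0$.
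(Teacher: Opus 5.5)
Your proof is correct and follows the paper's argument. You differentiate the orthogonality relation $\int_{-c}^{c}\pi_n\pi_{n-1}w\,dx=0$, identify the interior term as $-(\partial_c\Sigma_n)h_{n-1}$ from the $x^{n-1}$-coefficient, and then orthonormalise using $a_n=\sqrt{h_n/h_{n-1}}$ and the sign convention in $V_j$. Freezing $\pi_{n-1}$ at the current value of $c$ is a tidy way to dispose of the term $\int\pi_n\,\partial_c\pi_{n-1}\,w\,dx$, which the paper's compressed version drops implicitly by orthogonality.
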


\begin{proof}
Since
\[
\pi_n(x)=x^n-\Sigma_nx^{n-1}+\cdots,
\]
differentiation of
\[
\int_{-c}^{c}\pi_n(x)\pi_{n-1}(x)w(x)\,dx=0
\]
with respect to $c$ gives
\[
(\partial_c\Sigma_n)h_{n-1}
=
w(c)\pi_n(c)\pi_{n-1}(c)
+
w(-c)\pi_n(-c)\pi_{n-1}(-c).
\]
Since all zeros lie in $(-c,c)$,
\[
\operatorname{sgn}\pi_j(-c)=(-1)^j.
\]
After orthonormalisation and the identity
$\sqrt{h_n/h_{n-1}}=a_n$, this becomes
\eqref{eq:root-sum-motion}.
\end{proof}

\subsection{The crossing lemma}

\begin{lemma}[Crossing lemma]\label{lem:crossing}
Let $-1<\alpha<\beta$ and $n\geq1$. Suppose that, at $c=c_0$,
\[
d_n(c_0)=0,\quad d_{n-1}(c_0)>0,\quad
\Sigma_n(c_0)<\lambda.
\]
Then $\partial_c d_n(c_0)>0$.
\end{lemma}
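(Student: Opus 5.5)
The plan is to differentiate the second form of the integration identity in Lemma~\ref{lem:integration-identity} with respect to $c$, express both root-sum derivatives through Lemma~\ref{lem:root-sum-motion}, and then simplify at $c_0$ using the three-term recurrence evaluated at $x=\pm c$. Since $\lambda$ and $\nu$ do not depend on $c$,
\[
\partial_c d_n=(n+\nu-1)\,\partial_c\Sigma_n-(n+\nu)\,\partial_c\Sigma_{n+1},
\]
with
\[
\partial_c\Sigma_n=a_n(U_nU_{n-1}-V_nV_{n-1}),\qquad
\partial_c\Sigma_{n+1}=a_{n+1}(U_{n+1}U_n-V_{n+1}V_n).
\]

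\emph{Step 1: use the crossing condition.} At $c_0$, the condition $d_n=0$ together with $U_n,V_n>0$ forces $U_n=V_n=:W>0$. The two derivatives then become
\[
\partial_c\Sigma_n=a_nW\Delta,\qquad \partial_c\Sigma_{n+1}=a_{n+1}W(U_{n+1}-V_{n+1}),
\qquad \Delta:=U_{n-1}-V_{n-1}.
\]
The hypothesis $d_{n-1}>0$ means $U_{n-1}^2>V_{n-1}^2$. Because both quantities are positive, this gives $\Delta>0$.

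\emph{Step 2: eliminate $U_{n+1}-V_{n+1}$.} Evaluate \eqref{eq:three-term} at $x=c$ and multiply by $\sqrt{w(c)}$. Then evaluate it at $x=-c$ and multiply by $(-1)^{n+1}\sqrt{w(-c)}$. Keeping track of the sign convention in $V_j$, this yields
\[
a_{n+1}U_{n+1}=(c-b_n)U_n-a_nU_{n-1},\qquad
a_{n+1}V_{n+1}=(c+b_n)V_n-a_nV_{n-1}.
\]
Subtracting and using $U_n=V_n=W$ gives
\[
a_{n+1}(U_{n+1}-V_{n+1})=-2b_nW-a_n\Delta.
\]
Substituting this into the expression for $\partial_c d_n$ produces
\[
\partial_c d_n(c_0)=W\Bigl[(2n+2\nu-1)\,a_n\Delta+2(n+\nu)\,b_nW\Bigr].
\]

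\emph{Step 3: sign of each term.} By the first form of \eqref{eq:integration-identity}, the condition $d_n(c_0)=0$ reads $(n+\nu)b_n=\lambda-\Sigma_n$. The hypothesis $\Sigma_n(c_0)<\lambda$ therefore makes the second term $2W(\lambda-\Sigma_n)$ strictly positive. For the first term, note that $2n+2\nu-1=2n+\alpha+\beta+1$. Since $\alpha+\beta>-2$ and $n\geq1$, this exceeds $2n-1>0$. Combined with $a_n>0$ and $\Delta>0$, the first term is positive as well, so $\partial_c d_n(c_0)>0$.

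The only delicate point is the sign bookkeeping in Step 2, in particular the factor $(-1)^{n+1}$ at $x=-c$, which is exactly what turns $-c-b_n$ into $c+b_n$. Everything else is a direct substitution.
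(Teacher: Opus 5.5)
Your proof is correct and follows the paper's argument essentially step for step. You differentiate the second form of the integration identity, apply the root-sum motion formula at levels $n$ and $n+1$, and eliminate $U_{n+1}-V_{n+1}$ using the recurrence at $\pm c_0$, which yields $(2n+2\nu-1)\partial_c\Sigma_n+2(n+\nu)b_nW^2$ with both terms positive; the only cosmetic difference is that you write the second term as $2W^2(\lambda-\Sigma_n)$ instead of first extracting $b_n>0$.
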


\begin{proof}
At $c_0$ one has $U_n=V_n=:X>0$. Hence
\eqref{eq:root-sum-motion} and $d_{n-1}>0$ give
\[
\partial_c\Sigma_n
=a_nX(U_{n-1}-V_{n-1})>0.
\]
The first equality in \eqref{eq:integration-identity} gives
\[
b_n=\frac{\lambda-\Sigma_n}{n+\nu}>0.
\]
Evaluation of \eqref{eq:three-term} at $\pm c_0$, with the signs
absorbed into the definition of $V_j$, gives
\[
\begin{aligned}
c_0X
&=a_{n+1}U_{n+1}+b_nX+a_nU_{n-1},
\\[7pt]
c_0X
&=a_{n+1}V_{n+1}-b_nX+a_nV_{n-1}.
\end{aligned}
\]
Subtracting these identities yields
\[
a_{n+1}(U_{n+1}-V_{n+1})
=-2b_nX-a_n(U_{n-1}-V_{n-1}).
\]
Applying \eqref{eq:root-sum-motion} at levels $n$ and $n+1$ gives
\[
\partial_c\Sigma_{n+1}
=-2b_nX^2-\partial_c\Sigma_n.
\]
Differentiation of the second equality in
\eqref{eq:integration-identity} now gives
\begin{equation}
\partial_c d_n(c_0)
=
(2n+2\nu-1)\partial_c\Sigma_n
+2(n+\nu)b_nX^2>0,
\label{eq:crossing-derivative}
\end{equation}
because $\nu>0$ and hence $2n+2\nu-1>0$.
\end{proof}

\begin{remark}[Addendum to the first-crossing argument]
\label{rem:crossing-addendum}
The conclusion of Lemma~\ref{lem:crossing} remains valid without the
hypothesis $\Sigma_n(c_0)<\lambda$.
Indeed, integrating $(\sigma wp_np_k)'$ for $k<n$, where $\sigma(x)=1-x^2$,
and using \eqref{eq:three-term}, we obtain, as in
Lemma~\ref{lem:integration-identity},
\begin{equation}\label{eq:addendum-expansion}
\begin{split}
 \sigma(x)p_n'(x)={}&-(nx+\Sigma_n)p_n(x)+C_na_np_{n-1}(x)\\[7pt]
 &+\sigma(c)\sum_{k<n}
   \bigl(U_nU_k-(-1)^{n+k}V_nV_k\bigr)p_k(x),
\end{split}
\end{equation}
where $C_n=2n+2\nu-1$ and primes denote differentiation in $x$.
At the crossing $c=c_0$, put $X=U_n=V_n>0$.
Divide by $p_n$, evaluate at $\pm c$ and add.
Markov's theorem \cite[Theorem~6.12.1]{Szego1975} and the telescopic
sum of~\eqref{eq:integration-identity} then give
\begin{equation}\label{eq:addendum-static-crossing}
 C_na_n\frac{U_{n-1}-V_{n-1}}{X}
 \geq 2(n+\nu)\Sigma_n-2n\lambda.
\end{equation}
Combining this with~\eqref{eq:integration-identity},
\eqref{eq:root-sum-motion} and the equality in
\eqref{eq:crossing-derivative}, we obtain
\[
 (n+\nu)\partial_cd_n
 \geq(n+\nu-1)C_n\partial_c\Sigma_n+2\nu\lambda X^2>0.
\]
Here $\partial_c\Sigma_n>0$ follows from
\eqref{eq:root-sum-motion} and $d_{n-1}>0$.
Since $n\geq1$ and $\nu,\lambda>0$, the inequality is strict.
\end{remark}

\subsection{The positive sector}

\begin{proposition}\label{prop:positive-sector}
Conjecture~\ref{conj:gautschi} holds for every $n\geq1$,
$0<c<1$, and $0<\alpha<\beta$.
\end{proposition}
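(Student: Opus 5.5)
Fix $0<\alpha<\beta$ and argue by a double induction: on the degree $n$ (with $d_0=\lambda>0$ by Lemma~\ref{lem:integration-identity}, since $\Sigma_0=0$ and $b_0$ enters only through $\Sigma_1$; in fact $d_0=\lambda-\nu b_0$, and $d_0>0$ is the case $n=0$ of Proposition~\ref{prop:boundary} applied to $D_0=w(c)-w(-c)>0$), and, for fixed $n$, on the parameter $c\in(0,1)$ via a first-crossing argument. Assume $d_{n-1}(c)>0$ for all $c\in(0,1)$. The function $d_n$ is real-analytic in $c$.

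First show that $d_n(c)>0$ for small $c$. Two routes are available. One may note that for $c$ small one has $\rho_c^{-1}\alpha\le\beta$, equivalently $\delta\ge c s$, so $(\alpha,\beta)\in\mathcal M_c^{+}$ and Corollary~\ref{cor:milovanovic-region} gives $d_n>0$ on an interval $(0,c_1]$ with $c_1=\delta/s$. (If $c_1\ge1$ there is nothing left to prove.) This avoids any asymptotics in $c\downarrow0$. Then let $c_0=\inf\{c\in(c_1,1):d_n(c)\le0\}$ and suppose $c_0<1$. By continuity $d_n(c_0)=0$ and $d_n>0$ on $(0,c_0)$, so $\partial_cd_n(c_0)\le0$.

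To reach a contradiction, apply Lemma~\ref{lem:crossing} at $c_0$. The hypotheses $d_n(c_0)=0$ and $d_{n-1}(c_0)>0$ hold by construction and the induction hypothesis. It remains to verify $\Sigma_n(c_0)<\lambda$, which is the main obstacle. The plan is to show that $\Sigma_n$ is increasing in $c$ on $(0,c_0]$ and bounded by $\lambda$ through the integration identity. Since $d_{n-1}>0$ on $(0,1)$ forces $U_{n-1}>V_{n-1}$, and $d_n\ge0$ on $(0,c_0]$ forces $U_n\ge V_n$, Lemma~\ref{lem:root-sum-motion} gives $\partial_c\Sigma_n>0$ there.

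Rather than using monotonicity, I would use the identity more directly. The telescoped form of \eqref{eq:integration-identity} gives $d_n=\lambda+(n+\nu-1)\Sigma_n-(n+\nu)\Sigma_{n+1}$, and at the crossing $b_n=(\lambda-\Sigma_n)/(n+\nu)$. So the hypothesis $\Sigma_n(c_0)<\lambda$ is equivalent to $b_n(c_0)>0$. In the positive sector the weight satisfies $w'/w>0$ on $[0,c]$ and, because $\beta>\alpha$, $w(x)>w(-x)$ for $x>0$. A symmetrisation of the Gaussian rule, or the known fact that the zeros of orthogonal polynomials for a weight with $w(x)/w(-x)$ increasing on $(0,c)$ have positive sum, gives $\Sigma_k>0$ and $b_k>0$. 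To get the strict bound $\Sigma_n<\lambda$ instead, I would use the induction hypothesis: $d_{n-1}>0$ means $\lambda-\Sigma_{n-1}-(n-1+\nu)b_{n-1}>0$. Since $\Sigma_n=\Sigma_{n-1}+b_{n-1}$, this yields $\Sigma_n<\lambda-(n-2+\nu)b_{n-1}$. For $n\ge2$ this is at most $\lambda$ once $b_{n-1}\ge0$, using $\nu>1$ since $\alpha+\beta>0$. For $n=1$ one has $\Sigma_1=b_0=m_1/m_0$, and $\nu>1$ again gives $\Sigma_1<\lambda$ from $d_0>0$. Thus the key auxiliary fact is positivity of the $b_k$ in the positive sector. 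I expect this to require the most care, and I would try to prove it first via $b_k=\int x\,p_k^2w\,dx$ and the pairing $x\leftrightarrow -x$, using $p_k(x)^2$ versus $p_k(-x)^2$ together with the increase of $w(x)/w(-x)$.

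With the hypotheses verified, Lemma~\ref{lem:crossing} yields $\partial_cd_n(c_0)>0$, contradicting $\partial_cd_n(c_0)\le0$. Hence $d_n>0$ on all of $(0,1)$, which completes the induction. Alternatively, once Remark~\ref{rem:crossing-addendum} is granted, the root-sum hypothesis is unnecessary and the argument reduces to the initialisation and first-crossing steps alone.
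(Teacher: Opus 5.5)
Your architecture matches the paper's: induction on $n$, Milovanovi\'c's criterion on $(0,\delta/s]$, a first crossing $c_0$, and Lemma~\ref{lem:crossing}. However, the step you identify as the main obstacle is not closed, and the mechanism you propose for it is false. You reduce $\Sigma_n(c_0)<\lambda$ to $b_{n-1}(c_0)\geq0$, and then assert $b_k>0$ throughout the positive sector.

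For small $c$ this fails for every $k\geq1$. After the substitution $x=ct$, the weight is $1+2\lambda ct+O(c^2)$, and the $O(c^2)$ term is even. Differentiate $b_k=\int t\,p_k^2\,\tilde w\,dt$ at the Legendre weight, using $\langle\dot p_k,p_{k-1}\rangle=-A_k$. This gives $b_k(c)=2\lambda c^2\bigl(A_{k+1}^2-A_k^2\bigr)+O(c^4)$, where $A_k^2=k^2/(4k^2-1)$ decreases for $k\geq1$. For example, $b_1(c)=-\tfrac{2\lambda c^2}{15}+O(c^4)$. For the model weight $1+\epsilon t$ on $[-1,1]$ one has exactly $b_1=\frac{4\epsilon}{15(1-\epsilon^2/3)}-\frac{\epsilon}{3}$, which is negative for $\epsilon^2<3/5$.

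The ``known fact'' you invoke also fails at $c=1$. There $w(x)/w(-x)$ increases for every $\alpha<\beta$, yet $b_k=(\beta^2-\alpha^2)/\bigl((2k+s)(2k+s+2)\bigr)<0$ for $k\geq1$ when $s<0$.

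Because $c_*=\delta/s$ is small when $\beta/\alpha$ is close to $1$, the range $c>c_*$ in which a crossing must be excluded contains points where $b_{n-1}<0$, already for $n=2$. So positivity of $b_{n-1}$ cannot be what rules the crossing out. Your claim that $w'/w>0$ on $[0,c]$ is likewise false exactly when $c>c_*$.

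The repair is one line, and it is what the paper does: make $\Sigma_n<\lambda$ part of the induction hypothesis. Your inequality $\Sigma_n<\lambda-(n-2+\nu)b_{n-1}$ is the same as $(n+\nu-1)\Sigma_n<\lambda+(n+\nu-2)\Sigma_{n-1}$. Bound $\Sigma_{n-1}$ by $\lambda$ (inductively, starting from $\Sigma_0=0$) instead of by $\Sigma_n$, and use $n+\nu-2>0$; for $n=1$, this is where $\nu>1$ enters. This gives $\Sigma_n<\lambda$ on all of $(0,1)$ with no sign information on the $b_k$, and Lemma~\ref{lem:crossing} then applies at $c_0$.

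Your closing fallback via Remark~\ref{rem:crossing-addendum} is logically admissible, since that remark does not depend on this proposition. But it replaces this elementary bookkeeping with the heavier expansion-plus-Markov argument, and it leaves your main line unproved as written.
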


\begin{proof}
Since $\nu>1$, we prove by induction on $n\geq0$ that
\begin{equation}
d_n(c)>0
\quad\text{and}\quad
\Sigma_{n+1}(c)<\lambda,
\quad 0<c<1.
\label{eq:induction-claim}
\end{equation}
For $n=0$,
\[
U_0^2-V_0^2
=
\frac{w(c)-w(-c)}{h_0}>0,
\quad
\frac{w(c)}{w(-c)}
=
\left(\frac{1+c}{1-c}\right)^{\beta-\alpha}>1.
\]
The identity \eqref{eq:integration-identity} with $n=0$ then gives
\(\nu\Sigma_1<\lambda\), and hence $\Sigma_1<\lambda$.

Assume \eqref{eq:induction-claim} at level $n-1$. Since
$s=\alpha+\beta>0$, Corollary~\ref{cor:milovanovic-region} gives
$d_n(c)>0$ for
\begin{equation}
0<c\leq c_*:=\frac{\beta-\alpha}{\alpha+\beta}<1.
\label{eq:c-star}
\end{equation}
If $d_n$ vanished to the right of this interval, let
$Z_n:=\{c\in(c_*,1):d_n(c)=0\}$. Since $d_n(c_*)>0$, continuity
gives $\varepsilon>0$ with $d_n>0$ on $[c_*,c_*+\varepsilon]$.
Choose any $c_1\in Z_n$. The set
$Z_n\cap[c_*+\varepsilon,c_1]$ is a non-empty closed subset of the
compact interval $[c_*+\varepsilon,c_1]$, and hence has a least
element, say $c_0$. By the definition of $c_0$, the function $d_n$ is
positive immediately to its left, and therefore
\[
\partial_c d_n(c_0)=
\lim_{h\downarrow0}
\frac{d_n(c_0)-d_n(c_0-h)}{h}
\leq0.
\]
The induction hypothesis gives $d_{n-1}(c_0)>0$ and
$\Sigma_n(c_0)<\lambda$, so Lemma~\ref{lem:crossing} gives
$\partial_c d_n(c_0)>0$, a contradiction. Thus $d_n(c)>0$ on
$(0,1)$.

It remains to propagate the root-sum bound. By
\eqref{eq:integration-identity},
\[
(n+\nu)\Sigma_{n+1}
<
\lambda+(n+\nu-1)\Sigma_n
<
(n+\nu)\lambda.
\]
Therefore $\Sigma_{n+1}<\lambda$, completing the induction.
\end{proof}

\section{The degree-one case}
\label{sec:degree-one}

\begin{lemma}\label{lem:degree-one}
Let $0<c<1$ and $-1<\alpha<\beta$.
Conjecture~\ref{conj:gautschi} holds for $n=1$.
\end{lemma}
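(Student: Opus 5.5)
The plan is to reuse the first-crossing argument of Proposition~\ref{prop:positive-sector}, specialised to $n=1$. The hypothesis $\Sigma_1<\lambda$ in Lemma~\ref{lem:crossing} can fail when $s<0$, so I will use the strengthened form in Remark~\ref{rem:crossing-addendum} instead. That form only requires $d_1(c_0)=0$ and $d_0(c_0)>0$ at a crossing point $c_0$. The second condition holds for every $c\in(0,1)$ and every admissible pair, because
\[
U_0^2-V_0^2=\frac{w(c)-w(-c)}{h_0}>0,
\qquad
\frac{w(c)}{w(-c)}=\Bigl(\frac{1+c}{1-c}\Bigr)^{\delta}>1 .
\]

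The steps are as follows.

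\emph{Step 1.} If $s=0$, Corollary~\ref{cor:milovanovic-region} gives $d_1>0$ on all of $(0,1)$, and we are done. Otherwise it gives $d_1>0$ on $0<c\leq c_*:=\min\{\delta/\lvert s\rvert,1\}$. Since $\delta>0$, this interval is non-empty, so $d_1$ is positive near $c=0$. If $c_*=1$ there is nothing left to prove.

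\emph{Step 2.} Suppose $d_1$ vanishes somewhere in $(c_*,1)$. Since $d_1$ is real-analytic, hence continuous, I will pick the least zero $c_0$ in $[c_*+\varepsilon,c_1]$, exactly as in the proof of Proposition~\ref{prop:positive-sector}. This gives $\partial_c d_1(c_0)\le 0$.

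\emph{Step 3.} At $c_0$ we have $d_0(c_0)>0$, so Remark~\ref{rem:crossing-addendum} with $n=1$ yields $\partial_c d_1(c_0)>0$. This contradicts Step~2, so $d_1>0$ on $(0,1)$. By Proposition~\ref{prop:boundary} and \eqref{eq:normalized-endpoint}, this is the conjectured inequality for $n=1$.

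The main obstacle is that Remark~\ref{rem:crossing-addendum} is only sketched, and its $n=1$ instance is exactly what is needed. I would therefore verify it by hand for $n=1$, where everything is explicit: $p_0$ is constant, $\Sigma_1=b_0=m_1/m_0$, and $V_0^2=w(-c)/h_0$. At a crossing, $X=U_1=V_1$. From \eqref{eq:crossing-derivative} we get
\[
\partial_c d_1=(2\nu+1)\,a_1X(U_0-V_0)+2(1+\nu)\,b_1X^2 .
\]
If $b_1\ge0$ this is positive. If $b_1<0$, I would rewrite $b_1$ via Lemma~\ref{lem:integration-identity} and use $d_1=0$, which gives
\[
(1+\nu)\,b_1=\lambda-b_0 .
\]
I would then compare $X(U_0-V_0)$ with $X^2$ using the degree-one recurrence evaluated at $\pm c_0$. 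The target is
\[
(2\nu+1)\,a_1(U_0-V_0)>2(b_0-\lambda)\,X ,
\]
which is the inequality \eqref{eq:addendum-static-crossing} at $n=1$.

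As an independent check, there is also a direct route. It amounts to showing
\[
w(c)(c-b_0)^2>w(-c)(c+b_0)^2 .
\]
Since $\lvert b_0\rvert<c$, this is equivalent to
\[
\frac{c-b_0}{c+b_0}>\rho_c^{\delta/2}.
\]
One would then bound the mean $b_0$ of $w$ on $[-c,c]$, for instance by log-concavity of $w$ on $[-c,c]$ when $\alpha,\beta\ge0$. I expect this direct route to be messier than the crossing argument in the negative wedge.
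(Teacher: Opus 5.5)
Your Steps 1--3 give a complete proof, by a route different from the paper's, provided you accept Remark~\ref{rem:crossing-addendum}. Relying on it is legitimate. The Remark is stated for all $n\geq1$ and does not use Lemma~\ref{lem:degree-one}. Its argument is also sound: at a crossing, \eqref{eq:addendum-static-crossing} is equivalent to $\sum_j x_j/(c_0^2-x_j^2)\geq0$, and this follows from Markov's comparison with the symmetric weight $(1-x^2)^{\nu-1}$.

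The paper argues at each fixed $c$, with no crossing. It proves exactly the inequality of your ``direct route'': your condition $(c-b_0)/(c+b_0)>\rho_c^{\delta/2}$ is $b_0<c\tanh(\lambda\ell)$ with $\ell=\operatorname{arctanh}c$, since $\rho_c^{\delta/2}=e^{-2\lambda\ell}$. It does so uniformly in $-1<\alpha<\beta$, without log-concavity. The tools are the substitution $x=\tanh u$, conditioning on $\lvert T\rvert$, a Chebyshev covariance inequality, and one integration by parts.

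The paper's route is independent of the crossing apparatus: it needs no analyticity in $c$, no Milovanovi\'c seed near $c=0$, and no Markov theorem. That is what makes it a genuinely ``direct'' base case. Your route gives a uniform mechanism. It also shows that, once the Remark is available, Lemma~\ref{lem:degree-one} is logically redundant: the induction in the proof of Theorem~\ref{thm:global} could start from the trivial $d_0>0$.

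Your proposed hand check of $n=1$ does not close as outlined, for two reasons.

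\textbf{The target is mislabelled.} Your displayed target $(2\nu+1)a_1(U_0-V_0)>2(b_0-\lambda)X$ is not \eqref{eq:addendum-static-crossing} at $n=1$. That inequality reads $(2\nu+1)a_1(U_0-V_0)\geq2\bigl((1+\nu)b_0-\lambda\bigr)X$. Your target is merely $\partial_cd_1(c_0)>0$ divided by $X$, i.e.\ a restatement of what must be proved.

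\textbf{The recurrence is not enough.} At $\pm c_0$ it only yields $a_1X=(c_0-b_0)U_0=(c_0+b_0)V_0$. Hence $a_1(U_0-V_0)=2a_1^2b_0X/(c_0^2-b_0^2)$, and $a_1^2$ is left uncontrolled; you would need a lower bound for it.

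The missing input is a second integration by parts, of $\bigl((1-x^2)w(x)(x-b_0)\bigr)'$, which gives
\[
h_0(1-b_0^2)-(2\nu+1)h_1=(1-c^2)\bigl(w(c)(c-b_0)+w(-c)(c+b_0)\bigr).
\]
Combine this with $d_0=\lambda-\nu b_0$ and the sign $b_0>0$. The sign holds because $\int_{-c}^{c}xw\,dx=\int_0^c x\bigl(w(x)-w(-x)\bigr)dx>0$.

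A simpler alternative is to differentiate $w(c)(c-b_0)^2-w(-c)(c+b_0)^2$ directly at its zero. This uses only $\partial_cb_0=\bigl(w(c)(c-b_0)-w(-c)(c+b_0)\bigr)/h_0$ and $d_0=\lambda-\nu b_0$. Either way,
\[
\partial_cd_1(c_0)=2b_0X^2\Bigl(\nu+\frac{1-c_0^2}{c_0^2-b_0^2}\Bigr)>0.
\]
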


\begin{proof}
Put
\[
\ell=\operatorname{arctanh}c,
\quad
\lambda=\frac{\beta-\alpha}{2},
\quad
\nu=\frac{\alpha+\beta+2}{2}.
\]
The parameter assumptions imply
\[
\nu-\lambda=\alpha+1>0,
\quad
\nu+\lambda=\beta+1>0.
\]
Make the change of variables
\[
x=\tanh u,
\quad
-\ell<u<\ell.
\]
Using
\[
1+\tanh u=\frac{e^u}{\cosh u},
\quad
1-\tanh u=\frac{e^{-u}}{\cosh u},
\]
we obtain
\begin{equation}
w(x)\,dx
=
e^{2\lambda u}\operatorname{sech}^{2\nu}u\,du.
\label{eq:hyperbolic-density}
\end{equation}
Let $T$ be a random variable with the probability density
proportional to the right-hand side of
\eqref{eq:hyperbolic-density}, and set
\[
\mu=\mathbb E(\tanh T).
\]
The monic polynomial of degree one is
\[
\pi_1(x)=x-\mu.
\]
Since $c-\mu>0$, $c+\mu>0$, and
\[
\frac{1-c}{1+c}=e^{-2\ell},
\]
taking the positive square root shows that the conjectured inequality
is equivalent to
\[
\frac{c+\mu}{c-\mu}<e^{2\lambda\ell},
\]
or, equivalently,
\begin{equation}
\mu<c\tanh(\lambda\ell).
\label{eq:degree-one-target}
\end{equation}

Let $R=\lvert T\rvert$. Conditional on $R=\tau$,
\[
\mathbb E(\tanh T\mid R=\tau)
=
\tanh \tau\,\tanh(2\lambda \tau)
=:H(\tau).
\]
The function $H$ is strictly increasing on $(0,\ell)$. The density
of $R$ is proportional to
\[
\operatorname{sech}^{2\nu}\tau
\cosh(2\lambda \tau)\,d\tau.
\]
Let $\sigma$ be the probability measure on $(0,\ell)$ with density
proportional to $\cosh(2\lambda \tau)$, and put
\[
g(\tau)=\operatorname{sech}^{2\nu}\tau.
\]
Then
\[
\mu=\frac{\mathbb E_{\sigma}(Hg)}
{\mathbb E_{\sigma}g}.
\]
The function \(g\) is strictly decreasing. If \(R_1\) and \(R_2\)
are independent random variables with law \(\sigma\), then
\[
\operatorname{Cov}_{\sigma}(H,g)
=
\frac12
\mathbb E_{\sigma}
\bigl[
(H(R_1)-H(R_2))
(g(R_1)-g(R_2))
\bigr]
\leq0.
\]
The inequality is strict because \(\sigma\) has a strictly positive
density on \((0,\ell)\), while \(H\) and \(g\) are strictly monotone
in opposite directions. Thus the product inside the expectation is
strictly negative whenever \(R_1\neq R_2\), and the diagonal has
\(\sigma\otimes\sigma\)-measure zero. Hence
\[
\operatorname{Cov}_{\sigma}(H,g)<0.
\]
Consequently,
\begin{equation}
\mu<\mathbb E_{\sigma}H=:m.
\label{eq:mu-comparison}
\end{equation}
The auxiliary measure $\sigma$ need not correspond to admissible
Jacobi parameters; only its finiteness on the compact interval is
used.

By the definition of $H$,
\[
m=
\frac{
\displaystyle
\int_0^\ell\tanh u\,\sinh(2\lambda u)\,du
}{
\displaystyle
\int_0^\ell\cosh(2\lambda u)\,du
}.
\]
Write $z=\lambda\ell$. Integration by parts gives
\begin{equation}
\begin{split}
2\lambda\int_0^\ell\tanh u\,\sinh(2\lambda u)\,du
&=
c\cosh(2z)
-
\int_0^\ell
\operatorname{sech}^2u\,\cosh(2\lambda u)\,du
\\[7pt]
&<
c\bigl(\cosh(2z)-1\bigr),
\end{split}
\label{eq:degree-one-ibp}
\end{equation}
because $\cosh(2\lambda u)>1$ for $u>0$ and
\[
\int_0^\ell\operatorname{sech}^2u\,du=c.
\]
Furthermore,
\[
2\lambda\int_0^\ell\cosh(2\lambda u)\,du=\sinh(2z)
\]
and
\[
\frac{\cosh(2z)-1}{\sinh(2z)}=\tanh z.
\]
It follows from \eqref{eq:degree-one-ibp} that
\[
m<c\tanh z=c\tanh(\lambda\ell).
\]
Together with \eqref{eq:mu-comparison}, this proves
\eqref{eq:degree-one-target}.
\end{proof}

\section{An ensemble bound in the residual sector}
\label{sec:ensemble}

The residual negative sector $\mathcal P_c\cup\mathcal O_c$ admits
the parametrisation
\begin{equation}
\alpha=-r-\lambda,
\quad
\beta=-r+\lambda.
\label{eq:residual-parametrisation}
\end{equation}
In this remaining negative wedge, the parameter restrictions are
\begin{equation}
0<r<1,
\quad
0<\lambda<\min\{r,1-r\},
\quad
\lambda<rc.
\label{eq:residual-parameter-range}
\end{equation}
In this notation,
\[
w(x)
=(1-x^2)^{-r}
\exp\bigl(2\lambda\operatorname{arctanh}x\bigr).
\]

We first extract a root-sum estimate from the associated orthogonal
polynomial ensemble. For the duration of the next lemma, the endpoint
of the interval is denoted by $a$ in order to reserve $c$ for the
endpoint fixed in Theorem~\ref{thm:global}.

\begin{lemma}[Ensemble Ward bound]
\label{lem:ensemble-ward}
Let $0<r<1$, $\lambda>0$, and $0<a<1$. For the weight
\[
w_\lambda(x)
=(1-x^2)^{-r}
\exp\bigl(2\lambda\operatorname{arctanh}x\bigr)
\]
on $[-a,a]$, let $\Sigma_N$ be the sum of the zeros of the monic
orthogonal polynomial of degree $N$. Then, for every $N\geq1$,
\begin{equation}
(N-ra^2)\Sigma_N<N\lambda a^2.
\label{eq:cumulative-root-sum}
\end{equation}
\end{lemma}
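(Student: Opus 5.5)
The plan is to derive an exact Ward identity for the orthogonal polynomial ensemble attached to $w_\lambda$ on $[-a,a]$, and then to control the single error term it produces by a positive-association argument.

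\textbf{Ward identity.} Let $(x_1,\dots,x_N)$ have joint density proportional to $\prod_{i<j}(x_i-x_j)^2\prod_i w_\lambda(x_i)$ on $[-a,a]^N$. Then $\mathbb E\sum_i x_i=\int x\,K_N(x,x)w_\lambda(x)\,dx=\sum_{k<N}b_k=\Sigma_N$. Apply the vector field $(a^2-x_i^2)\partial_{x_i}$ to this density, sum over $i$, and integrate. The boundary terms vanish because $a^2-x_i^2$ vanishes at $\pm a$ and $w_\lambda$ is bounded on $[-a,a]$ (since $a<1$). Three contributions appear:
\begin{itemize}
\item the divergence term gives $-2\sum_i x_i$;
\item after symmetrising in $i,j$, the Vandermonde part gives $\sum_{i<j}\frac{2[(a^2-x_i^2)-(a^2-x_j^2)]}{x_i-x_j}=-2(N-1)\sum_i x_i$;
\item the weight part uses $w_\lambda'/w_\lambda=2(\lambda+rx)/(1-x^2)$.
\end{itemize}
Together these give
\[
N\Sigma_N=\mathbb E\sum_{i}\frac{(a^2-x_i^2)(\lambda+rx_i)}{1-x_i^2}.
\]
Next write $\frac{a^2-x^2}{1-x^2}=a^2-(1-a^2)\frac{x^2}{1-x^2}$, which rearranges the identity into
\[
(N-ra^2)\Sigma_N
=N\lambda a^2-(1-a^2)\,\mathbb E\sum_i\frac{x_i^2(\lambda+rx_i)}{1-x_i^2}.
\]
So \eqref{eq:cumulative-root-sum} reduces to showing that the last expectation is strictly positive.

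\textbf{Splitting the remainder.} Split the expectation as
\[
\lambda\,\mathbb E\sum_i\frac{x_i^2}{1-x_i^2}+r\,\mathbb E\sum_i g(x_i),\quad g(x)=\frac{x^3}{1-x^2}.
\]
The first term is strictly positive because $\lambda>0$ and the one-point density is positive almost everywhere. It therefore suffices to prove $\mathbb E_\lambda\sum_i g(x_i)\ge0$. For this, regard $\lambda$ as a free parameter $t\in[0,\lambda]$, with the interval, $r$ and $N$ fixed.
\begin{itemize}
\item At $t=0$ the weight is even, so the ensemble is invariant under $x\mapsto-x$. Since $g$ is odd, the expectation vanishes.
\item Differentiating the normalised density in $t$ gives
\[
\partial_t\,\mathbb E_t\sum_i g(x_i)=2\operatorname{Cov}_t\Bigl(\sum_i g(x_i),\sum_i\operatorname{arctanh}x_i\Bigr).
\]
\end{itemize}
Both $\sum_i g(x_i)$ and $\sum_i\operatorname{arctanh}x_i$ are symmetric and coordinatewise increasing functions of the configuration, since $g$ and $\operatorname{arctanh}$ are increasing on $(-a,a)$.

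\textbf{Main obstacle: association.} The key step is to show that this covariance is non-negative. I would pass to the ordered configuration $x_1<\dots<x_N$, where both functions become increasing functions on the Weyl chamber. On that chamber, the density $\prod_{i<j}(x_j-x_i)^2\prod_i w_t(x_i)$ is MTP$_2$: each factor $x_j-x_i$ with $i<j$ is TP$_2$ in $(x_i,x_j)$ on the chamber, products of TP$_2$ factors are MTP$_2$, and the product of the one-variable weights does not affect this property. The FKG/Harris inequality (Holley's criterion on the distributive lattice formed by the chamber) then gives non-negative covariance of increasing functions.

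The delicate points are verifying the MTP$_2$ lattice condition on the cone rather than on the full product space, and justifying differentiation under the integral. The latter is harmless because all integrands are bounded on $[-a,a]^N$ apart from the integrable factor $(1-x^2)^{-r}$, which is in fact bounded since $a<1$.

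\textbf{Conclusion.} Integrating the derivative from $t=0$ to $t=\lambda$ gives $\mathbb E_\lambda\sum_i g(x_i)\ge0$. Together with the strictly positive $\lambda$-term, the remainder is strictly positive, and \eqref{eq:cumulative-root-sum} follows with strict inequality.
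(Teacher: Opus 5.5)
Your proposal is correct and follows the paper's proof essentially step for step: the same Ward identity (you integrate over the full cube $[-a,a]^N$, where the density is smooth, which spares you the collision-face estimates the paper carries out on the ordered chamber), the same rearrangement into $(N-ra^2)\Sigma_N=N\lambda a^2-(1-a^2)\{\cdots\}$, and the same MTP$_2$ association on the Weyl chamber to obtain $\mathbb E_\lambda\sum_i x_i^3/(1-x_i^2)\geq0$. The only cosmetic difference is that you integrate $\partial_t\mathbb E_t S=2\operatorname{Cov}_t(S,H)\geq0$ over $t\in[0,\lambda]$, whereas the paper applies association once under the untilted law, writing $\mathbb E_{N,\lambda}S=\operatorname{Cov}_{N,0}(S,e^{2\lambda H})/\mathbb E_{N,0}e^{2\lambda H}$, and so avoids differentiating in the parameter.
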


\begin{proof}
Let
\[
\Omega_{N,a}:=
\left\{
\boldsymbol{x}\in\mathbb R^N:\,
-a<x_1<\cdots<x_N<a
\right\}
\]
be the ordered chamber, and write
\[
\Delta(\boldsymbol{x})
=
\prod_{1\leq i<j\leq N}(x_j-x_i)
\]
for the Vandermonde determinant. Consider the probability measure
\begin{equation}
d\mathbb P_{N,t}(\boldsymbol{x})
=
\frac{1}{Z_{N,t}}
\Delta(\boldsymbol{x})^2
\prod_{i=1}^N
(1-x_i^2)^{-r}
e^{2t\operatorname{arctanh}x_i}\,d\boldsymbol{x}.
\label{eq:tilted-ope}
\end{equation}
The measure $\mathbb P_{N,0}$ is multivariate totally positive of
order two (MTP$_2$). Indeed, the ordered chamber is a sublattice under
coordinatewise minimum and maximum. If
$\boldsymbol{m}=\boldsymbol{x}\wedge\boldsymbol{y}$ and
$\boldsymbol{M}=\boldsymbol{x}\vee\boldsymbol{y}$, then, for
$i<j$,
\[
(m_j-m_i)(M_j-M_i)
\geq
(x_j-x_i)(y_j-y_i).
\]
There is equality when the two coordinate comparisons have the same
orientation. In the crossed case $x_i\leq y_i$ and $x_j\geq y_j$,
the difference between the left- and right-hand sides is
\[
(y_i-x_i)(x_j-y_j)\geq0;
\]
the other crossed case is identical. Squaring and multiplying over
$i<j$ proves the MTP$_2$ lattice inequality for
$\Delta^2$. The one-particle factors cancel under coordinatewise
minimum and maximum. Extending the density by zero outside the
ordered chamber preserves the same inequality because the chamber is
a sublattice. Thus it defines an MTP$_2$ density on
$\mathbb R^N$.

Since $a<1$, the one-particle factor in \eqref{eq:tilted-ope} is
continuously differentiable and bounded above and below by positive
constants on $[-a,a]$. Thus the density in \eqref{eq:tilted-ope},
extended by zero outside the ordered chamber, is a non-negative integrable
MTP$_2$ density on $\mathbb R^N$. After normalisation it is a
probability law, and the MTP$_2$ association theorem applies to bounded
coordinatewise increasing functions. More precisely, we use
\cite[Theorem~4.2]{KarlinRinott1980}. Put
\[
H(\boldsymbol{x})
=\sum_{i=1}^N\operatorname{arctanh}x_i,
\quad
S(\boldsymbol{x})
=\sum_{i=1}^N\frac{x_i^3}{1-x_i^2}.
\]
Both $H$ and $S$ are increasing in every coordinate; for $S$ this
follows from
\[
\left(\frac{x^3}{1-x^2}\right)'
=\frac{x^2(3-x^2)}{(1-x^2)^2}\geq0.
\]
The involution
\[
\iota(x_1,\ldots,x_N)=(-x_N,\ldots,-x_1)
\]
preserves both the ordered chamber and $\mathbb P_{N,0}$, and it
sends $H$ and $S$ to their negatives. Hence
$\mathbb E_{N,0}S=0$. Since $0<a<1$, all coordinates in
$\Omega_{N,a}$ satisfy $-a<x_i<a$, and therefore $H$, $S$ and
$e^{2\lambda H}$ are bounded. Hence the covariance terms appearing
below are finite. Moreover, \(e^{2\lambda H}\) is coordinatewise increasing. Since
\(\mathbb P_{N,\lambda}\) is the exponential tilt of
\(\mathbb P_{N,0}\) by this function, we have
\[
\mathbb E_{N,\lambda}S
=
\frac{
\mathbb E_{N,0}\bigl(Se^{2\lambda H}\bigr)
}{
\mathbb E_{N,0}e^{2\lambda H}
}.
\]
Both \(S\) and \(e^{2\lambda H}\) are bounded and coordinatewise
increasing. Hence the MTP$_2$ association theorem gives
\[
\operatorname{Cov}_{N,0}
\bigl(S,e^{2\lambda H}\bigr)
\geq0.
\]
Since \(\mathbb E_{N,0}S=0\), it follows that
\begin{equation}
\begin{split}
\mathbb E_{N,\lambda}S
&=
\frac{
\mathbb E_{N,0}\bigl(Se^{2\lambda H}\bigr)
}{
\mathbb E_{N,0}e^{2\lambda H}
}
\\[7pt]
&=
\frac{
\operatorname{Cov}_{N,0}
\bigl(S,e^{2\lambda H}\bigr)
}{
\mathbb E_{N,0}e^{2\lambda H}
}
\geq0.
\end{split}
\label{eq:fkg-S}
\end{equation}

We now use a Ward identity. Write $f_{N,\lambda}$ for the
density in \eqref{eq:tilted-ope}. For $\varepsilon>0$, define
\[
\Omega_{N,a}^{\varepsilon}:=
\left\{\boldsymbol{x}\in\Omega_{N,a}:\;
\begin{aligned}
-a+\varepsilon<x_1<\cdots<x_N<a-\varepsilon,\;
\\[7pt]
x_{i+1}-x_i>\varepsilon,\ i=1,\dots,N-1
\end{aligned}
\right\}.
\]
Applying the divergence theorem to the vector field
\[
F_i(\boldsymbol{x})=(a^2-x_i^2)f_{N,\lambda}(\boldsymbol{x}),\quad i=1,\dots,N,
\]
on $\Omega_{N,a}^{\varepsilon}$ gives
\[
\int_{\Omega_{N,a}^{\varepsilon}}\sum_{i=1}^N
\partial_{x_i}F_i\,d\boldsymbol{x}
=\int_{\partial\Omega_{N,a}^{\varepsilon}}F\cdot\nu\,dS.
\]
On endpoint faces, \(a^2-x_i^2=O(\varepsilon)\), so the
corresponding flux is \(O(\varepsilon)\). On collision faces,
\(\Delta(\boldsymbol{x})^2=O(\varepsilon^2)\), so the corresponding
flux is \(O(\varepsilon^2)\). Since there are only finitely many
faces and their \((N-1)\)-dimensional measures remain uniformly
bounded, the boundary flux tends to zero as
\(\varepsilon\downarrow0\).

It remains to justify passage to the full chamber in the volume
integral. All one-particle factors and their logarithmic derivatives
are bounded on \([-a,a]\). For \(i<j\), write
\[
\Delta(\boldsymbol{x})^2
=
(x_i-x_j)^2\Delta_{ij}(\boldsymbol{x})^2,
\]
where
\[
\Delta_{ij}(\boldsymbol{x})
=
\prod_{\substack{1\leq k<\ell\leq N\\[7pt]
\{k,\ell\}\neq\{i,j\}}}
(x_\ell-x_k).
\]
Upon expanding \(\partial_{x_i}F_i\), the only apparent singularities
are terms bounded by a constant multiple of
\[
\frac{\Delta(\boldsymbol{x})^2}{|x_i-x_j|}
=
|x_i-x_j|\Delta_{ij}(\boldsymbol{x})^2.
\]
This expression extends continuously, and hence boundedly, to the
closed cube \([-a,a]^N\). All the remaining terms are bounded there
as well. Consequently,
\[
\sum_{i=1}^N\partial_{x_i}F_i
\]
is dominated on \(\Omega_{N,a}\) by an integrable function independent
of \(\varepsilon\). Dominated convergence therefore gives
\[
\lim_{\varepsilon\downarrow0}
\int_{\Omega_{N,a}^{\varepsilon}}
\sum_{i=1}^N\partial_{x_i}F_i\,d\boldsymbol{x}
=
\int_{\Omega_{N,a}}
\sum_{i=1}^N\partial_{x_i}F_i\,d\boldsymbol{x}.
\]
Together with the vanishing boundary flux, this yields
\[
\int_{\Omega_{N,a}}
\sum_{i=1}^N
\partial_{x_i}
\left((a^2-x_i^2)f_{N,\lambda}\right)
\,d\boldsymbol{x}
=0.
\]
Expanding the derivative gives
\[
  \begin{split}
  0={}&
  -2\,\mathbb E_{N,\lambda}\sum_{i=1}^Nx_i
\\[7pt]
&+
2\,\mathbb E_{N,\lambda}
\sum_{i<j}
\frac{(a^2-x_i^2)-(a^2-x_j^2)}{x_i-x_j}
\\[7pt]
&+
2\,\mathbb E_{N,\lambda}\sum_{i=1}^N
  \frac{(a^2-x_i^2)(\lambda+rx_i)}{1-x_i^2}.
  \end{split}
  \]
The sum inside the middle expectation is
$-(N-1)\sum_i x_i$. All the statistics used here are symmetric, so their expectations
under the ordered-chamber law agree with those under the usual
unordered orthogonal polynomial ensemble. The standard determinantal
representation of such an ensemble gives the one-point intensity
\[
K_N(x,x)w_\lambda(x),
\quad
K_N(x,y)=\sum_{k=0}^{N-1}p_k(x)p_k(y);
\]
see, for example,
\cite[Lemma~2.8 and equations~(2.33)--(2.35)]{Konig2005}.
Hence
\[
\begin{split}
\mathbb E_{N,\lambda}\sum_{i=1}^Nx_i
&=
\int_{-a}^{a}xK_N(x,x)w_\lambda(x)\,dx
\\[7pt]
&=
\sum_{k=0}^{N-1}
\int_{-a}^{a}x p_k(x)^2w_\lambda(x)\,dx
\\[7pt]
&=
\sum_{k=0}^{N-1}b_k
=
\Sigma_N.
\end{split}
\]
Consequently,
\begin{equation}
N\Sigma_N
=
\mathbb E_{N,\lambda}\sum_{i=1}^N
\frac{(a^2-x_i^2)(\lambda+rx_i)}{1-x_i^2}.
\label{eq:ward-root-sum}
\end{equation}
Combining \eqref{eq:ward-root-sum} with
\[
\frac{(a^2-x^2)(\lambda+rx)}{1-x^2}
=
a^2(\lambda+rx)
-
(1-a^2)
\left(
\lambda\frac{x^2}{1-x^2}
+
r\frac{x^3}{1-x^2}
\right),
\]
and using
\[
\mathbb E_{N,\lambda}\sum_{i=1}^N x_i=\Sigma_N,
\]
together with the definition
\[
S=\sum_{i=1}^N\frac{x_i^3}{1-x_i^2},
\]
gives the exact identity
\begin{equation}
\begin{split}
(N-ra^2)\Sigma_N
={}&N\lambda a^2
\\[7pt]
&-(1-a^2)
\left\{
\lambda\,\mathbb E_{N,\lambda}
\sum_{i=1}^N\frac{x_i^2}{1-x_i^2}
+r\,\mathbb E_{N,\lambda}S
\right\}.
\end{split}
\label{eq:ward-exact}
\end{equation}
The first expectation in braces is strictly positive, and the second
is non-negative by \eqref{eq:fkg-S}. Since $N-ra^2>0$,
\eqref{eq:cumulative-root-sum} follows.
\end{proof}

\begin{proof}[Proof of Theorem~\ref{thm:global}]
The case $c=1$ is immediate. Let $0<c<1$.
Corollaries~\ref{cor:gautschi-sector} and~\ref{cor:milovanovic-region}
cover $\mathcal G\cup\mathcal M_c$, and
Proposition~\ref{prop:positive-sector} covers $\mathcal N_c$.
It remains to treat $\mathcal P_c\cup\mathcal O_c$.
Fix $c$ and the parameters in
\eqref{eq:residual-parametrisation}, and let the endpoint $a$ vary in
$(0,c]$. Write $d_n(a)$ for the quantity in
\eqref{eq:normalized-endpoint} with $c$ replaced by $a$. For
\[
0<a\leq a_*:=\frac{\lambda}{r},
\]
one has
\[
\lambda+rx\geq\lambda-ra\geq0,
\quad -a\leq x\leq a.
\]
The boundary identity therefore gives $d_n(a)>0$ for every degree.

We now argue by induction on $n$. The case $n=1$ is
Lemma~\ref{lem:degree-one}. Suppose that $n\geq2$ and
that $d_{n-1}(a)>0$ for every $0<a\leq c$. If $d_n$ vanished in this
interval, define
$Z_n:=\{a\in(a_*,c]:d_n(a)=0\}$. Since $d_n>0$ on
$(0,a_*]$, the same compactness argument as in the proof of
Proposition~\ref{prop:positive-sector} shows that $Z_n$ has a least
element $a_0>a_*$. Again,
\[
\partial_a d_n(a_0)\leq0.
\]
If $(\alpha,\beta)\in\mathcal P_c$, then
\[
a_0^2\leq c^2\leq\frac{3}{3+r}
\leq\frac{n+1}{n+1+r}.
\]
Lemma~\ref{lem:ensemble-ward} gives $\Sigma_{n+1}(a_0)<\lambda$.
Since $d_n(a_0)=0$, \eqref{eq:integration-identity} yields
\[
(n+\nu-1)(\Sigma_n(a_0)-\lambda)
=(n+\nu)(\Sigma_{n+1}(a_0)-\lambda)<0.
\]
Thus $\Sigma_n(a_0)<\lambda$, and Lemma~\ref{lem:crossing} gives
$\partial_a d_n(a_0)>0$.
If $(\alpha,\beta)\in\mathcal O_c$, the refinement in
Remark~\ref{rem:crossing-addendum} gives the same strict inequality
without a condition on $\Sigma_n(a_0)$.
Both cases contradict $\partial_a d_n(a_0)\leq0$.
Hence $d_n(a)>0$ on $(0,c]$, and the induction is complete.
\end{proof}

\begin{remark}[The range of the Ward estimate]
\label{rem:ensemble-limitation}
The set $\mathcal P_c$ records the range obtained by combining the
Ward estimate with Lemma~\ref{lem:crossing} before strengthening its
hypotheses. Define
\[
C_{N,a}
=
(1-a^2)
\left\{
\lambda\,\mathbb E_{N,\lambda}
\sum_{i=1}^{N}\frac{x_i^2}{1-x_i^2}
+
r\,\mathbb E_{N,\lambda}
\sum_{i=1}^{N}\frac{x_i^3}{1-x_i^2}
\right\}.
\]
The proof of Lemma~\ref{lem:ensemble-ward} establishes that
$C_{N,a}>0$, while \eqref{eq:ward-exact} reads
\[
(N-ra^2)\Sigma_N=N\lambda a^2-C_{N,a}.
\]
Consequently, the sign $C_{N,a}>0$ alone implies
$\Sigma_N<\lambda$ whenever
\[
a^2\leq\frac{N}{N+r}.
\]
For $N=n+1\geq3$, this holds uniformly under the condition
\begin{equation}
c^2\leq\frac{3}{3+r}.
\label{eq:residual-slice-condition}
\end{equation}
Since $r=-(\alpha+\beta)/2$, this is precisely the condition defining
$\mathcal P_c$ used in the proof of Theorem~\ref{thm:global}.

Beyond this range, positivity of $C_{N,a}$ alone does not imply the
root-sum inequality required by Lemma~\ref{lem:crossing}.
That route would require the quantitative estimate
\[
C_{N,a}
>
\lambda\bigl((N+r)a^2-N\bigr)
\]
at the corresponding putative crossings whenever the right-hand side
is positive, or an independent treatment of the initial degrees.
Remark~\ref{rem:crossing-addendum} removes the root-sum hypothesis
from the crossing lemma and therefore avoids this additional
quantitative requirement.
\end{remark}

\section{Asymptotic endpoint difference}
\label{sec:asymptotic}

\begin{proposition}\label{prop:asymptotic-limit}
For every fixed $0<c<1$ and $-1<\alpha<\beta$,
\begin{equation}
\lim_{n\to\infty}\frac{D_n}{h_n}
=\frac{\beta-\alpha}{\sqrt{1-c^2}}.
\label{eq:asymptotic-limit}
\end{equation}
\end{proposition}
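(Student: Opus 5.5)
The plan is to start from the induced-measure form \eqref{eq:induced-identity} of Proposition~\ref{prop:boundary},
\[
\frac{D_n}{h_n}=\int_{-c}^{c}\frac{\delta-sx}{1-x^2}\,d\eta_n(x),
\qquad d\eta_n=p_n^2w\,dx .
\]
The integrand $f(x)=(\delta-sx)/(1-x^2)$ is continuous on the compact interval $[-c,c]$, since $c<1$. The limit therefore follows once I know that $\eta_n$ converges weakly to the arcsine (equilibrium) distribution of $[-c,c]$,
\[
d\eta_\infty(x)=\frac{dx}{\pi\sqrt{c^2-x^2}},\qquad -c<x<c .
\]
It then only remains to evaluate $\int f\,d\eta_\infty$.

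\emph{Step 1: limits of the recurrence coefficients.} For $0<c<1$ the weight $w$ is strictly positive and bounded on $[-c,c]$, so the measure $w\,dx$ has support $[-c,c]$ and positive density almost everywhere there. Rakhmanov's theorem (in the Denisov--Rakhmanov form for an interval) then gives
\[
a_n\to\frac c2,\qquad b_n\to0 .
\]
Alternatively, one can quote the classical Szeg\H{o}-class asymptotics, since $\log w$ is bounded on $[-c,c]$ and the Szeg\H{o} condition holds trivially. This is the one genuinely external ingredient, and I expect it to be the main obstacle: it cannot be derived from the identities of Sections~\ref{sec:boundary}--\ref{sec:ensemble}. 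I would therefore cite it rather than reprove it.

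\emph{Step 2: weak convergence of $\eta_n$.} Let $J$ be the Jacobi matrix of \eqref{eq:three-term}. The $k$-th moment of $\eta_n$ is
\[
\int x^k\,d\eta_n=\langle J^ke_n,e_n\rangle ,
\]
which is a fixed polynomial in the finitely many coefficients $a_j,b_j$ with $|j-n|\le k$. By Step~1 these moments converge, as $n\to\infty$, to the corresponding diagonal entry of the free Jacobi matrix with $a\equiv c/2$ and $b\equiv0$. Those are exactly the moments of the arcsine law on $[-c,c]$. All the $\eta_n$ are probability measures supported in a fixed compact set, so convergence of moments implies weak convergence (Weierstrass approximation). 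This is Nevai's standard argument.

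\emph{Step 3: evaluating the limit.} Under $\eta_\infty$ the odd part $-sx/(1-x^2)$ integrates to zero by symmetry. For the even part I substitute $x=c\cos\theta$, which gives
\[
\frac{\delta}{\pi}\int_0^\pi\frac{d\theta}{1-c^2\cos^2\theta}=\frac{\delta}{\sqrt{1-c^2}} ,
\]
a standard integral obtainable from $\tan\theta$ substitution or residues. Combining Steps~1--3 yields \eqref{eq:asymptotic-limit}.
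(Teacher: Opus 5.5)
Your proposal is correct and follows the paper's proof: the induced-measure identity, Rakhmanov's theorem giving $a_n\to c/2$ and $b_n\to0$, weak convergence of $p_n^2w\,dx$ to the arcsine law on $[-c,c]$, and evaluation of the limit integral after discarding the odd part. The only difference is that you sketch Nevai's moment argument for the weak convergence, which the paper simply cites from Van Assche.
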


\begin{proof}
Since $w$ is continuous and strictly positive on $[-c,c]$,
Rakhmanov's theorem \cite{Rakhmanov1977}, after scaling to $[-1,1]$,
gives $a_n\to c/2$ and $b_n\to0$.
The classical weak-convergence result
\cite[Section~4]{VanAssche1995} therefore yields
\[
p_n(x)^2w(x)\,dx
\xrightarrow{\mathrm w}
\frac{dx}{\pi\sqrt{c^2-x^2}}.
\]
The function $(\delta-sx)/(1-x^2)$ is continuous on $[-c,c]$.
Hence \eqref{eq:induced-identity} gives
\[
\frac{D_n}{h_n}\longrightarrow
\frac1\pi\int_{-c}^{c}
\frac{\delta-sx}{(1-x^2)\sqrt{c^2-x^2}}\,dx.
\]
The term containing $s$ is odd, and
\[
\frac1\pi\int_{-c}^{c}
\frac{dx}{(1-x^2)\sqrt{c^2-x^2}}
=\frac1{\sqrt{1-c^2}}.
\]
Since $\delta=\beta-\alpha$, this proves
\eqref{eq:asymptotic-limit}.
\end{proof}

\section{A M\"obius formulation of the endpoint inequality}
\label{sec:mobius}

Corollary~\ref{cor:recurrence-form} already expresses the conjecture
as a bound on recurrence coefficients. A change of variables gives a
second formulation in terms of a reciprocal moment of an induced
probability measure.

Put
\[
y_-=\frac{1-c}{1+c},
\quad
y_+=y_-^{-1},
\quad
y=\frac{1-x}{1+x}.
\]
Choose $\kappa_n\neq0$ so that
\begin{equation}
Q_n(y)
=
\kappa_n(1+y)^n
\pi_n\left(\frac{1-y}{1+y}\right)
\label{eq:mobius-polynomial}
\end{equation}
is monic. This is possible because $\pi_n(-1)\neq0$. Equivalently,
\[
\pi_n\left(\frac{1-y}{1+y}\right)
=
\kappa_n^{-1}\frac{Q_n(y)}{(1+y)^n}.
\]

\begin{proposition}\label{prop:mobius}
The polynomial $Q_n$ is orthogonal on $[y_-,y_+]$ with respect to
\begin{equation}
v_n(y)
=
y^\alpha(1+y)^{-\gamma},
\quad
\gamma=2n+\alpha+\beta+1.
\label{eq:mobius-weight}
\end{equation}
Put
\begin{equation}
M_n
=
\frac{
\displaystyle
\int_{y_-}^{y_+} Q_n(y)^2v_n(y)\,\frac{dy}{y}
}{
\displaystyle
\int_{y_-}^{y_+} Q_n(y)^2v_n(y)\,dy
}.
\label{eq:mobius-mean}
\end{equation}
Thus $M_n$ is the reciprocal moment of the probability measure
obtained by normalising $Q_n(y)^2v_n(y)\,dy$. Conjecture~\ref{conj:gautschi} is equivalent to
\begin{equation}
\alpha M_n<\beta.
\label{eq:mobius-criterion}
\end{equation}
\end{proposition}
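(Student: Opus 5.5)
The plan is to substitute $x=(1-y)/(1+y)$ directly into the orthogonality relations and into the boundary identity of Proposition~\ref{prop:boundary}.

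\emph{Step one: the Jacobian and the weight.} Under this substitution
\[
dx=-\frac{2\,dy}{(1+y)^2},\qquad 1-x=\frac{2y}{1+y},\qquad 1+x=\frac{2}{1+y}.
\]
The endpoints $x=\pm c$ correspond to $y=y_\mp$. Hence
\[
w(x)\,dx=2^{\alpha+\beta+1}\,y^\alpha(1+y)^{-\alpha-\beta-2}\,dy,
\]
up to the orientation sign.

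\emph{Step two: orthogonality of $Q_n$.} For $j<n$, I write $x^j(1+y)^{n-1}$ as a polynomial $P(y)$ of degree at most $n-1$. Every polynomial of degree at most $n-1$ in $y$ arises from some $\pi$ of degree at most $n-1$ in $x$, through $P(y)=(1+y)^{n-1}\pi(x)$. Substituting gives
\[
\pi_n(x)\pi(x)\,w(x)\,dx \;\propto\; Q_n(y)P(y)\,(1+y)^{-2n+1}\,y^\alpha(1+y)^{-\alpha-\beta-2}\,dy .
\]
The combined exponent of $1+y$ is $-(2n+\alpha+\beta+1)=-\gamma$. Therefore $\int Q_nP\,v_n\,dy=0$ for every $P$ with $\deg P\le n-1$, which is exactly \eqref{eq:mobius-weight}.

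\emph{Step three: the criterion.} I use Proposition~\ref{prop:boundary}, which states that the conjecture is equivalent to
\[
\int_{-c}^{c}\Bigl(-\frac{\alpha}{1-x}+\frac{\beta}{1+x}\Bigr)\pi_n^2 w\,dx>0 .
\]
In the $y$ variable,
\[
\frac{1}{1-x}=\frac{1+y}{2y},\qquad \frac{1}{1+x}=\frac{1+y}{2}.
\]
Also $\pi_n^2w\,dx$ becomes a positive constant times $Q_n^2(1+y)^{-2n}y^\alpha(1+y)^{-\alpha-\beta-2}\,dy$. Multiplying by the factor $(1+y)$ from the two reciprocals restores exactly $v_n$. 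The integrand is therefore a positive constant times
\[
\Bigl(-\frac{\alpha}{y}+\beta\Bigr)Q_n^2v_n .
\]
So the condition becomes $\beta\int Q_n^2v_n\,dy-\alpha\int Q_n^2v_n\,dy/y>0$. Dividing by the positive quantity $\int Q_n^2v_n\,dy$ gives $\alpha M_n<\beta$.

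The only delicate point is bookkeeping. I need to track the powers of $2$, the orientation sign and $\kappa_n^2$, and confirm that they combine into one overall positive constant. This holds because each of them appears squared or identically on both sides of the relations.
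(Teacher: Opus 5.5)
Your proof is correct. Step two is the same orthogonality argument as the paper's, but you obtain the criterion by a different route.

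\textbf{Your route.} You pull back the identity $D_n=\int_{-c}^{c}w'\pi_n^2\,dx$ of Proposition~\ref{prop:boundary}. The key observation is that $w'/w=\frac{1+y}{2}\bigl(\beta-\frac{\alpha}{y}\bigr)$ contributes exactly the factor $(1+y)$ that turns $(1+y)^{-2n-\alpha-\beta-2}$ into $(1+y)^{-\gamma}$. This gives $D_n=2^{\alpha+\beta}\kappa_n^{-2}h(\beta-\alpha M_n)$, where $h=\int_{y_-}^{y_+}Q_n^2v_n\,dy$.

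\textbf{The paper's route.} The paper stays in the $y$ variable throughout. It integrates $\bigl((1+y)v_nQ_n^2\bigr)'$ over $[y_-,y_+]$. It then uses the orthogonality of $Q_n$ with respect to $v_n$ to obtain $\int Q_nQ_n'v_n\,dy=0$ and $\int yQ_nQ_n'v_n\,dy=nh$. Finally, it checks by direct substitution that the ratio of the two boundary terms equals the left-hand side of \eqref{eq:gautschi}.

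\textbf{Comparison.} Your route is shorter and reuses an established result. In fact, your criterion step does not use Step two at all, since only the change of variables and the $x$-side orthogonality enter. The paper's route instead yields a self-contained analogue of Proposition~\ref{prop:boundary} for the pair $(Q_n,v_n)$. It identifies the boundary ratio with Gautschi's expression directly, without passing through $D_n$.

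\textbf{One correction to your closing remark.} The orientation sign is not disposed of by squaring. It cancels because $x=-c$ and $x=c$ correspond to $y_+$ and $y_-$ respectively. Hence $\int_{-c}^{c}f\,dx=\int_{y_-}^{y_+}f\,\frac{2\,dy}{(1+y)^2}$, which has a positive density. Combined with $\kappa_n^{-2}=\pi_n(-1)^2>0$, this gives the positive constant $2^{\alpha+\beta}\kappa_n^{-2}$ that the strict inequality requires.
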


\begin{proof}
Every polynomial in $x$ of degree at most $n-1$ becomes
$R(y)/(1+y)^{n-1}$, with $\deg R\leq n-1$, and the inverse
substitution shows that this correspondence is a bijection. Since
\[
dx=-\frac{2\,dy}{(1+y)^2}
\]
and
\[
w\left(\frac{1-y}{1+y}\right)
=
2^{\alpha+\beta}
y^\alpha(1+y)^{-\alpha-\beta},
\]
the orthogonality conditions transform exactly into those for the
weight \eqref{eq:mobius-weight}.

Let
\[
h=\int_{y_-}^{y_+} Q_n(y)^2v_n(y)\,dy.
\]
Integration by parts gives
\begin{equation}
\begin{split}
\left[(1+y)v_n(y)Q_n(y)^2\right]_{y_-}^{y_+}
&=
\int_{y_-}^{y_+}
\bigl((1+y)v_n Q_n^2\bigr)'\,dy
\\[7pt]
&=
h(\alpha M_n-\beta).
\end{split}
\label{eq:mobius-boundary}
\end{equation}
Here
\[
\int_{y_-}^{y_+} Q_nQ_n'v_n\,dy=0,
\quad
\int_{y_-}^{y_+} yQ_nQ_n'v_n\,dy=nh,
\]
the latter identity following from the leading coefficient of
$yQ_n'$. A direct substitution using $y_+=y_-^{-1}$ shows that
\begin{equation}
\frac{
(1+y_+)v_n(y_+)Q_n(y_+)^2
}{
(1+y_-)v_n(y_-)Q_n(y_-)^2
}
=
\left[
\frac{\pi_n(-c)}{\pi_n(c)}
\right]^2
\left(\frac{1-c}{1+c}\right)^{\beta-\alpha}.
\label{eq:mobius-ratio}
\end{equation}
Thus \eqref{eq:gautschi} is equivalent to the negativity of the
left-hand side of \eqref{eq:mobius-boundary}, which is precisely
\eqref{eq:mobius-criterion}.
\end{proof}

\begin{remark}
The reduction \eqref{eq:mobius-criterion} displays the two same-sign
sectors in opposite forms. Proposition~\ref{prop:positive-sector} proves
$M_n<\beta/\alpha$ when $0<\alpha<\beta$.
Theorem~\ref{thm:global} gives $M_n>\beta/\alpha$
throughout the negative sector. In the mixed-sign sector,
\eqref{eq:mobius-criterion} is
immediate, in agreement with
Corollary~\ref{cor:gautschi-sector}.
\end{remark}

\section{Concluding remarks}

Theorem~\ref{thm:global} covers the full admissible domain
\[
\mathcal G\cup\mathcal M_c\cup\mathcal N_c\cup\mathcal P_c
\cup\mathcal O_c.
\]
The extension to $\mathcal O_c$ follows from the strengthened
crossing lemma in Remark~\ref{rem:crossing-addendum}. It uses an
extension of the coefficient calculation in the proof of
Lemma~\ref{lem:integration-identity} and Markov's theorem on the
monotonicity of zeros; the first-crossing induction is unchanged.

The endpoint inequality retains two exact equivalent formulations:
the recurrence-coefficient inequality
\eqref{eq:recurrence-criterion} and, in the negative sector, the
reciprocal-moment inequality
\begin{equation}
M_n(\alpha,\beta;c)>\frac{\beta}{\alpha}.
\label{eq:remaining-mobius-problem}
\end{equation}
The Ward estimate identifies the explicit region $\mathcal P_c$,
while the direct degree-one proof and the large-degree limit remain
complementary descriptions of the same endpoint comparison.

\section*{Acknowledgements}
This work was initiated during L. Tertuliano da Silva's visit to
K. Castillo from July to September 2025, with support from FAPESP
Grant No.~2024/23674-1; the principal results of the original version were proved during that
visit. The manuscript was subsequently brought to its present form
through collaboration with V. Botta. K. Castillo acknowledges
financial support from the Centre for Mathematics of the University
of Coimbra (CMUC), funded by the Portuguese Foundation for Science
and Technology (FCT), under the projects
\href{https://doi.org/10.54499/UID/00324/2025}
{UID/00324/2025}
and UID/PRR/00324/2025, and from FCT under grant
\href{https://doi.org/10.54499/2022.00143.CEECIND/CP1714/CT0002}
{2022.00143.CEECIND/CP1714/CT0002}.

\end{document}